\newtheorem{corollary}{Corollary} 
\newtheorem*{theorem*}{Theorem} 
\newtheorem{definition}{Definition} 
\newtheorem{proposition}{Proposition} 
\newtheorem{remark}{Remark}
\newtheorem{theorem}{Theorem} 
\newtheorem{example}{Example} 
\numberwithin{equation}{section} 
\begin{document} 
	
\title[The generalized Pohoz{a}ev-Schoen identity]{The generalized Poho{z}aev-Schoen identity and some geometric applications} 

\author{Ezequiel Barbosa}
\address{Instituto de Ci\^{e}ncias Exatas-Universidade Federal de Minas Gerais\\ 30161-970-Belo Horizonte-MG-BR} 
\email{ezequiel@mat.ufmg.br}

\author{Levi Lopes de Lima}
\address{Universidade Federal do Cear\'a,
		Departamento de Matemática, Campus do Pici, R. Humberto Monte, s/n, 60455-760,
		Fortaleza/CE, Brazil.}
\email{levi@mat.ufc.br}

\author{Allan Freitas} 
\address{Instituto de Ci\^{e}ncias Exatas-Universidade Federal de Minas Gerais\\ 30161-970-Belo Horizonte-MG-BR}
\email{allangeorge@ufmg.br} 
\thanks{ The first and second authors were partially supported by CNPq/Brazil and  the third one was supported by Fapemig/MG}

\keywords{generalized Poho{z}aev-Schoen identity, $V$-static manifolds, Ricci solitons} 

\subjclass[2000]{Primary 53C25, 53C20,
53C21; Secondary 53C65} %\urladdr{http://www.mat.ufc.br/pgmat} \date{April 05, 2012}

\begin{abstract} 
In this note we show how a generalized Pohozaev-Schoen identity due to Gover and Orsted \cite{GO} can be used to obtain some rigidity results for $V$-static manifolds and generalized solitons. We also obtain an Alexandrov type result for certain  hypersurfaces in Einstein manifolds.
\end{abstract}

\maketitle

\section{Introduction}\label{intro}

In recent years, Riemannian metrics satisfying  certain systems of partial differential equations involving a function (and/or a vector field) defined on the underlying manifold have been  extensively studied. Examples include geometric structures related to the class of static metrics appearing in General Relativity, such as the critical metrics  introduced in \cite{MT1}, and the generalized Ricci solitons discussed in \cite{PRRS}. The dominant theme here is to obtain rigidity results making sure that the given metric necessarily belongs to a class of ``trivial'' solutions of the corresponding system.

The purpose of this note is to point out that some  results in this area can be quickly derived from a Pohozaev-type integral identity due to Schoen \cite{S}. In fact, we consider here  a generalization of Schoen's identity  due to Gover and Orsted \cite{GO}. This remarkable formula, which is a natural outgrowth of E. Noether's brilliant insight that infinite dimensional symmetry groups  lead to locally conserved quantities, actually 
encompasses a broad class of integral identities previously established in the literature. As illustrated below through examples, the identity can be used to access many interesting results on the topics mentioned above. Applications include rigidity statements for $V$-static manifolds in terms of the boundary behavior of the metric and certain classification results for generalized Ricci solitons under suitable  assumptions on the underlying geometry. 

This paper is organized as follows. In Section \ref{pohoschid}
we describe the generalized Pohozaev-Schoen integral identity due to Gover-Orsted \cite{GO} and present a list of locally conserved tensor fields to which it applies.     
In Section \ref{vstatic} we apply the classical special case due to Schoen to discuss several results for $V$-static manifolds. Examples of such results include a sharp upper bound for the area of the boundary of a  $V$-static $3$-manifold (Theorem \ref{main-thmD}), which extends a classical estimate due to Boucher-Gibbons-Horowitz \cite{BGH} for static manifolds, and a characterization in terms of the mean curvature of $V$-static manifolds in case the induced metric on the boundary is isometric to a geodesic sphere in a simply connected space form (Theorem \ref{main-thmBBB}), which extends a result due to Miao-Tam \cite{MT1}. When the $V$-static manifold is spin and has non-positive scalar curvature, we show in Section \ref{vstatspin} how ideas in \cite{HMRa} can be used to generalize this latter result under  the much weaker assumption that the induced metric on the boundary dominates the round metric on the geodesic sphere.   Finally, in Section \ref{applpsvstac} we discuss applications of the generalized Pohozaev-Schoen identity to solitons. In particular, we provide alternative proofs to results appearing in \cite{BBR} and \cite{GWX} (Theorems \ref{gensolbbr} and \ref{gensolgwx}). We also include in this section an Alexandrov-type result for a certain class of hypersurfaces in Einstein manifolds (Theorem \ref{alexth}).

\section{The generalized Pohozaev-Schoen integral identity}\label{pohoschid}

In this section we discuss the generalized  Poho{z}aev-Schoen identity presented in \cite{GO}.  
An earlier manifestation of this identity appears in an old paper by Pohozaev \cite{Po}, where it is proved that Dirichlet solutions of certain semilinear elliptic equation in domains of Euclidean space satisfy an integral identity. 
As another installment of this same principle,
Bourguignon and Ezin \cite{BE} proved that, in the  presence of a conformal vector field, the scalar curvature of a closed Riemannian manifold satisfies a similar  integral identity.
It turns out that 
both identities above are special cases of a more general identity due to Schoen \cite{S}. In the following we will always denote by $\stackrel{\circ}{B}$
the trace free part of a symmetric $2$-tensor
$B$. Moreover, unless otherwise stated, all manifolds are assumed to be connected and oriented.

\begin{theorem}[Pohoz{a}ev-Schoen integral identity \cite{S}]\label{schoenid}
Let $(M^{n},g)$ be a compact Riemannian manifold with boundary $\Sigma$ and $X$ a vector field on $M$. Then there holds 
\begin{equation}\label{schoenid2}
\int_{M}X(R_g)dM=-\frac{n}{n-2}\int_{M} \left\langle {\stackrel{\circ}{{\rm Ric}_g}},\mathcal{L}_{X}g\right\rangle dM+\frac{2n}{n-2}\int_{\Sigma}{\stackrel{\circ}{{\rm Ric}_g}}(X,\nu)d\Sigma,
\end{equation}
where  ${\rm Ric}_g$ is the Ricci tensor, $R_g$ is the scalar curvature, ${\mathcal L}$ is  Lie derivative and $\nu$ is the outward unit normal vector field.
\end{theorem}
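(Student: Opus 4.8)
The plan is to obtain \eqref{schoenid2} from a single application of the divergence theorem, with the contracted second Bianchi identity supplying the essential input. First I would record the behavior of the divergence of the trace-free Ricci tensor ${\stackrel{\circ}{{\rm Ric}_g}}={\rm Ric}_g-\frac{R_g}{n}g$. The contracted second Bianchi identity gives ${\rm div}\,{\rm Ric}_g=\tfrac12 dR_g$, while ${\rm div}(R_g\,g)=dR_g$; subtracting, one finds
\[
{\rm div}\,{\stackrel{\circ}{{\rm Ric}_g}}=\frac12 dR_g-\frac1n dR_g=\frac{n-2}{2n}\,dR_g.
\]
This is precisely the structural relation that forces the dimensional constant $n/(n-2)$ appearing in the statement.

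Next I would set up the standard integration-by-parts identity for a symmetric $2$-tensor $T$ paired against $X$. Letting $V$ be the vector field metrically dual to the $1$-form $T(X,\cdot)$, its divergence is
\[
{\rm div}\,V=\langle {\rm div}\,T, X\rangle+\langle T,\nabla X\rangle=\langle {\rm div}\,T, X\rangle+\tfrac12\langle T,\mathcal{L}_X g\rangle,
\]
where in the last equality I use that $T$ is symmetric, so only the symmetric part $\tfrac12\mathcal{L}_X g$ of $\nabla X$ contributes to the pairing. Integrating over $M$ and applying the divergence theorem (with $\nu$ the outward unit normal) turns the integral of ${\rm div}\,V$ into the boundary term $\int_\Sigma T(X,\nu)\,d\Sigma$, giving
\[
\int_\Sigma T(X,\nu)\,d\Sigma=\int_M\langle {\rm div}\,T, X\rangle\,dM+\frac12\int_M\langle T,\mathcal{L}_X g\rangle\,dM.
\]

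Finally I would specialize to $T={\stackrel{\circ}{{\rm Ric}_g}}$. The Bianchi computation gives $\langle {\rm div}\,{\stackrel{\circ}{{\rm Ric}_g}},X\rangle=\frac{n-2}{2n}\langle dR_g,X\rangle=\frac{n-2}{2n}X(R_g)$, so substituting, multiplying through by $2n/(n-2)$, and rearranging the three resulting terms reproduces \eqref{schoenid2} exactly, including the signs.

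I do not anticipate a genuine obstacle here: the argument is a clean, essentially bookkeeping computation once the two ingredients are in place. The only points deserving care are the correct evaluation of ${\rm div}\,{\stackrel{\circ}{{\rm Ric}_g}}$ (tracking the factor that produces the $n$-dependence) and the sign conventions when clearing denominators. Implicit throughout is enough regularity of $g$ and $X$ up to $\Sigma$ for the divergence theorem to apply, which is automatic under the standing smoothness assumptions.
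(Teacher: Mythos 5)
Your proof is correct, and every step checks out: the computation ${\rm div}\,\stackrel{\circ}{{\rm Ric}}_g=\frac{n-2}{2n}\,dR_g$ is right, the integration-by-parts identity for a symmetric $2$-tensor is right, and clearing the factor $\frac{2n}{n-2}$ reproduces \eqref{schoenid2} with the correct signs. It is worth noting, though, that you organize the argument differently from the paper, which never proves Theorem~\ref{schoenid} in isolation: it instead proves the generalized identity \eqref{schoenpohgen} for an arbitrary locally conserved symmetric tensor $B$ and recovers \eqref{schoenid2} in Remark~\ref{reduc} by specializing to the Einstein tensor $B=E_g$ (for which $b=\frac{2-n}{2}R_g$ and $\stackrel{\circ}{E}_g=\stackrel{\circ}{{\rm Ric}}_g$). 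Because the paper's tensor is divergence free, its divergence-theorem step produces no $X(R_g)$ term directly; that term emerges only after splitting $B$ into trace-free and pure-trace parts and integrating $\int_M b\,{\rm div}_g X\,dM$ by parts a second time. You instead pair $X$ against the trace-free Ricci tensor, which is \emph{not} divergence free, and its nonzero divergence delivers the $X(R_g)$ term in a single stroke. The trade-off: your route is shorter for the classical identity, while the paper's buys the general statement \eqref{schoenpohgen}, which is what the rest of the paper actually uses. That said, your argument generalizes verbatim, since for any locally conserved $B$ one has ${\rm div}\,\stackrel{\circ}{B}=-\frac{1}{n}\,db$, and the same single integration by parts applied to $T=\stackrel{\circ}{B}$ yields \eqref{schoenpohgen} directly.
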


Given the relevance of (\ref{schoenid2}) in connection with several deep questions in Geometric Analysis, it is natural to seek for a derivation of this identity from basic principles. This has been accomplished in \cite{GO}. Realizing that the proof of (\ref{schoenid2}) is a consequence of the contracted Bianchi identity and integration by parts,  these authors were able to deduce a rather general version of (\ref{schoenid2}).
In this regard, the next definition isolates the key concept interpolating between  the infinitesimal symmetries defined by vector fields and the corresponding integral identity.

\begin{definition}\label{loccons}
A symmetric 2-tensor $B$ on a Riemannian manifold is said to be {\em locally conserved} if it is divergence free, i.e. $\nabla^{i}B_{ij}=0$. \label{locallyconserverd}
\end{definition}

We now display examples of tensor fields fitting into this definition. 

\begin{example}\label{einsteinten}
	{\rm The contracted Bianchi identity in Riemannian (or Lorentzian) geometry says that the Einstein tensor 
		\[
		E_g={\rm Ric}_g-\frac{R_g}{2}g
		\]
		is locally conserved. This plays a central role in General Relativity and is of course at the core of the classical work by E. Noether on the relationship between symmetries and conservation laws. Notice that $\stackrel{\circ}{E}_g\,=\,\stackrel{\circ}{{\rm Ric}}_g$.}
	\end{example}

\begin{example}\label{schouten}
{\rm Set $Q_g=P_g-Jg$, where 
\[
P_g=\frac{1}{n-2}\left({\rm Ric}_g-\frac{R_g}{2(n-1)}g\right),
\]
is the Schouten tensor and 
$J=g^{ij}P_{ij}$ is its trace. It follows once again from the contracted Bianchi identity that $Q_g$ is locally conserved.}
\end{example}

\begin{example}\label{hyper}
{\rm Let $(M,g)$ be a hypersurface of Euclidean space $\mathbb{R}^{n+1}$, $II$ its second fundamental form and $H=g^{ij}II_{ij}$ the mean curvature.  It follows that $II$ satisfies the contracted Codazzi equation
\[
\nabla^{i}II_{ij}=\nabla_{j}H,
\]
and therefore the symmetric 2-tensor $A=II-Hg$ is locally conserved. This result holds more generally if $\mathbb R^{n+1}$ is replaced by any Einstein manifold;  see the proof of Theorem \ref{alexth} below.}
\end{example}

\begin{example}
	\label{lovelock} {\rm (Lovelock tensors)  Let $(M^n,g)$ be a Riemannian manifold and $dM$ the associated volume element. Given a vector field $X\in\mathcal X(M)$ we have 
\[
d({\bf i}_XdM)=d({\bf i}_X dM)+{\bf i}_X d(dM)=\mathcal L_XdM=({\rm div}_gX)dM,
\]		
where $\mathcal L$ is Lie derivative and ${\bf i}_X$ is contractoin with $X$.
Thus, the correspondence $X\leftrightarrow\omega= {\bf i}_XdM$ defines an isomorphism between $\mathcal A^1(M)$ and $\mathcal A^{n-1}(M)$, where $\mathcal A^p(M)$ denotes the space of differential $p$-forms on $M$. Obviously, $X$ is divergence free if and only if $\omega$ is closed. Similarly, the correspondence $X_1\otimes X_2\leftrightarrow {\bf i}_{X_1} dM\otimes {\bf i}_{X_2} dM$ defines an isomorphism between $\mathcal T^2(M)$, the space of covariant $2$-tensors over $M$ and $\mathcal A^{n-1}(M)\otimes_{\mathcal A^0(M)}\mathcal A^{n-1}(M)$. By restriction we obtain an isomorphism between $\mathcal S^2(M)\subset \mathcal T^2(M)$, the subspace  of symmetric $2$-tensors, and $\mathcal S^2(\mathcal A^{n-1}(M))\subset A^{n-1}(M)\otimes_{\mathcal A^0(M)}\mathcal A^{n-1}(M)$, where by definition $\eta\in \mathcal S^2(\mathcal A^{p}(M))$ if and only if 
it
is symmetric in the sense that
\[
\eta(e_{i_1}\wedge\cdots\wedge e_{i_p}\otimes e_{j_1}\wedge\cdots\wedge e_{j_p})=\eta(e_{j_1}\wedge\cdots\wedge e_{j_p}\otimes e_{i_1}\wedge\cdots\wedge e_{i_p}).
\]		 
It is easy to check that $B\in\mathcal S^2(M)$ is locally conserved if and only if the corresponding element $\widetilde B\in \mathcal S^2(\mathcal A^{n-1}(M))$ satisfies $d^\nabla\widetilde B=0$, where $d^\nabla$ is the covariant exterior differential. 	Now, for each $1\leq k\leq [(n-1)/2]$ define $\widetilde L_{2k}\in \mathcal S^2(\mathcal A^{n-1}(M))$ by
\[
\widetilde L_{2k}=R_g\wedge\stackrel{k}{\cdots}\wedge R_g\wedge g\wedge\cdots\wedge g,
\]	
where $R_g$ is the curvature tensor of $g$.
Since $d^\nabla g=0$ ($g$ is parallel) and $d^\nabla R_g=0$ (Bianchi identity)	we see that $d^\nabla\widetilde L_{2k}=0$. Thus, the corresponding symmetric two-tensor $L_{2k}$, called the {\em Lovelock tensor}, is locally conserved \cite{L}. It is easy to check that $L_2$ is a multiple of $E_g$, the Einstein tensor.}
\end{example}

We now state the generalized Pohozaev-Schoen identity considered in \cite{GO}.

\begin{theorem} \cite{GO}
\label{gpi}
Let $(M,g)$ be a compact Riemannian manifold with boundary $\Sigma$. If $B$ is a locally conserved symmetric 2-tensor and $X$ is a vector field on $M$ there holds 
\begin{equation}\label{schoenpohgen}
  \int_{M}X(b)dM=\frac{n}{2}\int_{M} \left\langle {\stackrel{\circ}{B}},\mathcal{L}_{X}g\right\rangle dM-n\int_{\Sigma}{\stackrel{\circ}{B}}(X,\nu)d\Sigma,
 \end{equation}
 where $b=g^{ij}B_{ij}$ is the trace of $B$.
\end{theorem}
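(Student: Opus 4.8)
The plan is to reduce the identity to a single application of the divergence theorem, exactly as the authors indicate: the entire content is the hypothesis $\nabla^i B_{ij}=0$ combined with integration by parts. First I would introduce the $1$-form (equivalently vector field) $W$ with components $W_j = B_{ij}X^i$ and compute its divergence. Since $B$ is locally conserved, the term in which the derivative hits $B$ drops out, leaving $\nabla^j W_j = B_{ij}\nabla^j X^i$. Using the symmetry of $B$ to symmetrize the covariant derivative, this equals $\tfrac12\langle B,\mathcal{L}_X g\rangle$, where I adopt $(\mathcal{L}_X g)_{ij} = \nabla_i X_j + \nabla_j X_i$.

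Next I would apply the divergence theorem to $W$, which produces the flux $\int_\Sigma B(X,\nu)\,d\Sigma$, yielding the preliminary identity $\tfrac12\int_M \langle B,\mathcal{L}_X g\rangle\,dM = \int_\Sigma B(X,\nu)\,d\Sigma$. The remaining work is algebraic bookkeeping to pass from $B$ to its trace-free part $\stackrel{\circ}{B}$ and to produce the term $X(b)$. Substituting $B = \stackrel{\circ}{B} + \tfrac{b}{n}g$ on the left and using $\langle g,\mathcal{L}_X g\rangle = 2\,{\rm div}_g X$ splits off a term $\tfrac1n\int_M b\,{\rm div}_g X\,dM$.

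A second integration by parts, via $\nabla_i(bX^i) = X(b) + b\,{\rm div}_g X$, then converts this into $-\tfrac1n\int_M X(b)\,dM$ plus a boundary contribution $\tfrac1n\int_\Sigma b\langle X,\nu\rangle\,d\Sigma$. Finally I would collect the two boundary integrals and recognize, through $\stackrel{\circ}{B}(X,\nu) = B(X,\nu) - \tfrac{b}{n}\langle X,\nu\rangle$, that after clearing an overall factor of $n$ they combine into precisely $-n\int_\Sigma \stackrel{\circ}{B}(X,\nu)\,d\Sigma$; rearranging produces (\ref{schoenpohgen}).

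There is no genuine conceptual obstacle here: the identity falls out once $W$ is chosen correctly. The only step demanding care is the constant-chasing, that is, tracking the factors $\tfrac1n$ and $\tfrac{n}{2}$ through the two integrations by parts and verifying that the boundary terms assemble into the trace-free combination $\stackrel{\circ}{B}(X,\nu)$ rather than leaving a residual $b\langle X,\nu\rangle$ piece. Keeping the conventions $(\mathcal{L}_X g)_{ij} = \nabla_i X_j + \nabla_j X_i$ and $\stackrel{\circ}{B} = B - \tfrac{b}{n}g$ fixed throughout is what guarantees the cancellations close up.
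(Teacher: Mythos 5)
Your proposal is correct and follows exactly the paper's own argument: both hinge on the divergence of $W_j=B_{ij}X^i$ reducing to $\tfrac12\langle B,\mathcal{L}_Xg\rangle$ via local conservation and symmetry, followed by the trace decomposition $B=\stackrel{\circ}{B}+\tfrac{b}{n}g$, the second integration by parts $\int_M b\,{\rm div}_gX\,dM=-\int_M X(b)\,dM+\int_\Sigma b\,g(X,\nu)\,d\Sigma$, and the recombination of boundary terms into $\stackrel{\circ}{B}(X,\nu)$. The constant-chasing you flag does close up as claimed, so nothing further is needed.
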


For the sake of completeness we include here the simple  proof of this result.
First observe that integration by parts yields 
\[
\int_{\Sigma}B(X,\nu)d\Sigma=\displaystyle\int_{M}\nabla^{i}(B_{ij}X^{j})dM.
\]
Since $B$ is locally conserved and symmetric we find that 
\[
\nabla^{i}(B_{ij}X^{j})=B_{ij}\nabla^{i}X^{j}=\frac{1}{2}B_{ij}(\nabla^{i}X^{j}+\nabla^{j}X^{i})=\frac{1}{2}\langle B,\mathcal{L}_{X}g\rangle\,.
\]
Therefore,
\begin{eqnarray*}
 \int_{\Sigma}B(X,\nu)d\Sigma&=&\frac{1}{2}\int_{M}\langle B,\mathcal{L}_{X}g\rangle dM\\
  & = & \frac{1}{2}\int_{M}\langle{\stackrel{\circ}{B}},\mathcal{L}_{X}g\rangle dM+\frac{1}{2n}\int_{M}b\,\langle g,\mathcal{L}_{X}g\rangle dM\\
  & = & \frac{1}{2}\int_{M}\langle{\stackrel{\circ}{B}},\mathcal{L}_{X}g\rangle d\mu_{g}+\frac{1}{n}\int_{M}b\,{\rm div}_gX d\mu_{g}.
 \end{eqnarray*}
Since 
\[
 \int_Mb\,{\rm div}\,XdM=-\int_{M}X(b)dM
 +\int_{\Sigma}b\, g(X,\nu) d\Sigma,
\]
the result follows.

We now list a few useful consequences of this result. 

\begin{corollary}
If $M$ is closed  then
 \begin{equation}\label{schpohocol1}
  \int_{M}X(b)dM=\frac{n}{2}\displaystyle\int_{M} \left\langle {\stackrel{\circ}{B}},\mathcal{L}_{X}g\right\rangle dM.
 \end{equation}
\end{corollary}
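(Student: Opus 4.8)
The plan is to obtain this corollary as the immediate specialization of Theorem \ref{gpi} to the boundaryless setting. Since $M$ is closed, by definition it is compact with empty boundary, so $\Sigma=\partial M=\emptyset$. The surface integral appearing in the general identity \eqref{schoenpohgen}, namely $n\int_{\Sigma}\stackrel{\circ}{B}(X,\nu)\,d\Sigma$, is then an integral over the empty set and hence vanishes identically. Substituting this into \eqref{schoenpohgen} leaves precisely \eqref{schpohocol1}, and the proof is complete.

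Should one prefer a self-contained derivation rather than a citation of the theorem, I would simply repeat the computation carried out for Theorem \ref{gpi}, exploiting the absence of boundary at each application of the divergence theorem. First I would use the local conservation and symmetry of $B$ to rewrite $\nabla^{i}(B_{ij}X^{j})=\tfrac12\langle B,\mathcal{L}_{X}g\rangle$; integrating over the closed manifold $M$, the divergence theorem gives $\int_{M}\nabla^{i}(B_{ij}X^{j})\,dM=0$ with no boundary contribution. Next I would split $B=\stackrel{\circ}{B}+\tfrac{b}{n}g$ and use $\langle g,\mathcal{L}_{X}g\rangle=2\,\mathrm{div}_{g}X$ to obtain $0=\tfrac12\int_{M}\langle\stackrel{\circ}{B},\mathcal{L}_{X}g\rangle\,dM+\tfrac1n\int_{M}b\,\mathrm{div}_{g}X\,dM$. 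A final integration by parts, again with vanishing boundary term, replaces $\int_{M}b\,\mathrm{div}_{g}X\,dM$ by $-\int_{M}X(b)\,dM$, and rearranging the constants yields \eqref{schpohocol1}.

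There is no genuine obstacle here, since all the analytic content is already contained in Theorem \ref{gpi}. The only point one must verify is that the boundary integral drops out, and this is immediate from $\partial M=\emptyset$; in the alternative derivation the corresponding observation is simply that the divergence theorem on a closed manifold produces no boundary term.
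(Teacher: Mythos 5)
Your proposal is correct and matches the paper, which presents this corollary as an immediate consequence of Theorem \ref{gpi}: the boundary term $n\int_{\Sigma}\stackrel{\circ}{B}(X,\nu)\,d\Sigma$ in \eqref{schoenpohgen} vanishes because $\partial M=\emptyset$, leaving exactly \eqref{schpohocol1}. Your alternative self-contained derivation is also a faithful rerun of the paper's proof of the theorem with the boundary contributions deleted, so there is nothing to add.
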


\begin{corollary}
If $X$ is a conformal vector field, i.e., ${\mathcal{L}}_{X} g= fg$ for some function $f$ on $M$, then
 \begin{equation}\label{schpohocol2}
  \int_{M}X(b)dM=-n\int_{\partial M}{\stackrel{\circ}{B}}(X,\nu)d\Sigma.
 \end{equation}
In particular, $\int_{M} X(b)dM=0$ if $M$ is closed.
\end{corollary}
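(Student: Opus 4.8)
The plan is to apply Theorem~\ref{gpi} directly and observe that the conformality hypothesis forces the bulk integral in (\ref{schoenpohgen}) to vanish identically. First I would substitute the conformal condition $\mathcal{L}_{X}g=fg$ into the integrand of the interior term, obtaining
\[
\left\langle {\stackrel{\circ}{B}},\mathcal{L}_{X}g\right\rangle=f\left\langle {\stackrel{\circ}{B}},g\right\rangle.
\]
The decisive step is the elementary observation that, under the natural inner product on symmetric $2$-tensors, pairing with the metric recovers the trace: $\langle {\stackrel{\circ}{B}},g\rangle=g^{ij}{\stackrel{\circ}{B}}_{ij}=\operatorname{tr}_g{\stackrel{\circ}{B}}$. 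Since ${\stackrel{\circ}{B}}$ is by definition the trace-free part of $B$, this trace vanishes, whence $\langle {\stackrel{\circ}{B}},\mathcal{L}_{X}g\rangle=0$ pointwise on $M$.

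Consequently the entire interior integral $\frac{n}{2}\int_{M}\langle {\stackrel{\circ}{B}},\mathcal{L}_{X}g\rangle\,dM$ drops out of (\ref{schoenpohgen}), and what remains is precisely the boundary term, yielding (\ref{schpohocol2}). For the final assertion, I would simply note that when $M$ is closed one has $\partial M=\emptyset$, so the surviving boundary integral is also empty; this forces $\int_{M}X(b)\,dM=0$, as claimed. (Alternatively, this follows at once by combining (\ref{schpohocol1}) with the vanishing of the interior integrand just established.)

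I do not anticipate any genuine obstacle in this argument, as it is a direct specialization of the already-proved Theorem~\ref{gpi}. The only point demanding a modicum of care is the verification that contraction of the trace-free tensor ${\stackrel{\circ}{B}}$ against the metric $g$ gives zero; everything else is immediate substitution. It is worth emphasizing that no hypothesis on $B$ beyond being a locally conserved symmetric $2$-tensor is needed, since the mechanism is purely the $g$-orthogonality of ${\stackrel{\circ}{B}}$ to the conformal deformation $fg$.
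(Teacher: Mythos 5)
Your argument is correct and is precisely the intended one: the paper states this corollary without proof as an immediate consequence of Theorem~\ref{gpi}, and the mechanism is exactly what you identify, namely that $\langle\stackrel{\circ}{B},fg\rangle=f\,\mathrm{tr}_g\stackrel{\circ}{B}=0$ pointwise, which kills the interior term in (\ref{schoenpohgen}) and leaves only the boundary integral (empty when $M$ is closed). Nothing is missing; your closing remark that no hypothesis beyond local conservation of $B$ is needed is also accurate.
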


\begin{remark}
\label{reduc}{\rm It is easy to check that the classical Pohozaev-Schoen identity (\ref{schoenid2}) follows from (\ref{schoenpohgen}) by taking $B=E_g$ as in Example \ref{einsteinten}.}
\end{remark}

\section{Applications of the Pohozahev-Shoen identity to $V$-static manifolds}\label{vstatic}

Let $M$ be a connected, smooth $n$-manifold with (a possibly disconnected) boundary $\Sigma$. Fix a metric $\gamma$ 
on $\Sigma$ and let $\mathcal{M}_{\gamma}$ be the set of all metrics on $M$ such
that $g|_{T\Sigma}=\gamma$. Next we consider the volume functional
$
V:\mathcal{M}_{\gamma}\rightarrow \mathbb{R}.
$
Fix a constant $K$ and consider the set $\mathcal{M}^{K}_{\gamma}$ of all  metrics $g\in \mathcal{M}_{\gamma}$ such that $R_g=K$. In \cite{MT1} it is proved that a metric $g\in \mathcal{M}^{K}_{\gamma}$
with the property that the first Dirichlet eigenvalue of $(n - 1)\Delta_g + K$ is positive is a critical
point for the volume functional in $\mathcal{M}^{K}_{\gamma}$
if and only if there exists a smooth function $\lambda$ on $M$ such that
\begin{equation}\label{vstat}
-(\Delta_g \lambda)g + \nabla^2_g\lambda-\lambda {\rm Ric}_g=g\quad {\rm on}\quad {\rm int}\, M,
\end{equation}
and $\lambda|_{\Sigma}=0$, where $\Delta_g$ and $\nabla^2_g$ are the Laplacian and the Hessian operator with respect to $g$, respectively. We then say that $g$ is a $V$-static metric with potential function $\lambda$ and that $(M,g)$ is a $V$-static manifold. 
We may assume that $R_g=K=\varepsilon n(n-1)$, $\varepsilon=-1,0,1$.
We then set $\mathcal M^\varepsilon_\gamma=\mathcal M^K_\gamma$. Also, it is known that $\lambda\geq 0$ and $\Sigma=\lambda^{-1}(0)$. In fact, each connected component of $\Sigma$ is umbilical with mean curvature given by $H=-(\partial\lambda/\partial\nu)^{-1}>0$. 
In the following we always assume that $M$ is compact. For further results on the geometry of $V$-static manifolds see \cite{MT1}, \cite{MT2} and \cite{CEM}.

In \cite[Theorem 6]{MT1} it is shown that among compact domains  in  simply connected space forms, geodesic balls are the only $V$-static manifolds (in the spherical case one has to assume that the ball is contained in an open hemisphere). 
A natural question here is to ask if this characterizations remains valid in a larger class of metrics. The next result  provides a partial answer to this question.

\begin{theorem}\cite[Theorem 2.1, Theorem 4.1]{MT2}\label{MTam}
Let $(M,g)$ be a
compact $V$-static manifold which is either Einstein or simply connected and locally conformally flat. If $\Sigma$ is connected then $(M,g)$ is isometric to a geodesic ball in a simply connected space form.
\end{theorem}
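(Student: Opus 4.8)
The plan is to treat the two cases separately, reducing both to a rigidity statement of Obata type. In either case the first move is to extract the trace-free part of the $V$-static equation (\ref{vstat}): writing it as $\nabla^2_g\lambda=(\Delta_g\lambda)g+\lambda\,{\rm Ric}_g+g$ and discarding the pure-trace pieces gives $\stackrel{\circ}{\nabla^2_g\lambda}=\lambda\,\stackrel{\circ}{{\rm Ric}}_g$. Tracing (\ref{vstat}) yields $(n-1)\Delta_g\lambda+R_g\lambda=-n$, so that, with $R_g=\varepsilon n(n-1)$, the Laplacian of $\lambda$ is an affine function of $\lambda$. I will also use repeatedly that $R_g$ is constant and that $\Sigma=\lambda^{-1}(0)$ is connected, umbilical, and carries $\partial\lambda/\partial\nu=-1/H<0$ constant on $\Sigma$, as recalled above.

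\emph{Einstein case.} Here $\stackrel{\circ}{{\rm Ric}}_g=0$, so the trace-free identity collapses to $\stackrel{\circ}{\nabla^2_g\lambda}=0$, i.e. $\nabla^2_g\lambda=\frac{\Delta_g\lambda}{n}g$. Combining this with the affine expression for $\Delta_g\lambda$ and absorbing a constant into $\lambda$ (set $u=\lambda+\frac{1}{\varepsilon(n-1)}$ when $\varepsilon\neq0$) puts the equation in the Obata form $\nabla^2_g u=-\varepsilon u\,g$, with the Euclidean analogue $\nabla^2_g\lambda=-\frac{1}{n-1}g$ when $\varepsilon=0$. A non-constant function whose Hessian is pointwise proportional to the metric forces $(M,g)$ to be rotationally symmetric about the interior maximum of $\lambda$; since $R_g$ is the constant curvature of the corresponding model and $\Sigma=\lambda^{-1}(0)$ is a regular, connected, umbilical level set with $\lambda>0$ inside, the boundary version of the Obata/Reilly rigidity identifies $(M,g)$ with a geodesic ball in the simply connected space form of curvature $\varepsilon$.

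\emph{Locally conformally flat case.} The goal is to prove that $(M,g)$ is in fact Einstein and then invoke the previous paragraph. I would feed the generalized identity (\ref{schoenpohgen}) of Theorem \ref{gpi} the locally conserved tensor $B=E_g$ (equivalently $\stackrel{\circ}{{\rm Ric}}_g$, which is divergence free precisely because $R_g$ is constant) together with $X=\nabla_g\lambda$. Since the trace $b=g^{ij}(E_g)_{ij}$ is then constant and $\mathcal L_{\nabla_g\lambda}g=2\nabla^2_g\lambda$, substituting $\stackrel{\circ}{\nabla^2_g\lambda}=\lambda\,\stackrel{\circ}{{\rm Ric}}_g$ annihilates the bulk pairing and leaves
\[
\int_M\lambda\,|\stackrel{\circ}{{\rm Ric}}_g|^2\,dM=\int_\Sigma\frac{\partial\lambda}{\partial\nu}\,\stackrel{\circ}{{\rm Ric}}_g(\nu,\nu)\,d\Sigma,
\]
using that on $\Sigma$ the gradient $\nabla_g\lambda$ is normal. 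The left-hand side is nonnegative because $\lambda\geq0$, so it suffices to show the boundary contribution is nonpositive. Local conformal flatness enters here: vanishing Weyl (vanishing Cotton in dimension three) together with $R_g$ constant makes $\stackrel{\circ}{{\rm Ric}}_g$ a Codazzi tensor. Umbilicity of $\Sigma$ with $H$ constant gives ${\rm Ric}_g(X,\nu)=0$ for $X$ tangent, via the contracted Codazzi-Mainardi equation of the immersion, and the Codazzi property of $\stackrel{\circ}{{\rm Ric}}_g$ is what pins down the remaining boundary quantity $\stackrel{\circ}{{\rm Ric}}_g(\nu,\nu)$. Forcing the integrand to the right sign yields $\stackrel{\circ}{{\rm Ric}}_g\equiv0$ where $\lambda>0$, so $(M,g)$ is Einstein and the Einstein case closes the argument; simple connectivity is used to identify the resulting constant-curvature model globally.

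The hard part will be exactly the boundary analysis in the locally conformally flat case: the identity (\ref{schoenpohgen}) reduces everything to the single boundary integrand $\stackrel{\circ}{{\rm Ric}}_g(\nu,\nu)$, but extracting its sign genuinely requires the Codazzi structure from vanishing Weyl/Cotton coupled with umbilicity and constancy of $H$. An alternative I would keep in reserve is to apply the Ricci commutation identity to $\stackrel{\circ}{\nabla^2_g\lambda}=\lambda\,\stackrel{\circ}{{\rm Ric}}_g$: on a conformally flat manifold the Codazzi property cancels the derivative-of-Ricci terms and the curvature acting on $\nabla_g\lambda$ becomes algebraically determined, forcing a warped-product (rotationally symmetric) structure and hence $\stackrel{\circ}{{\rm Ric}}_g=0$ directly. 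Either route isolates this conformally flat estimate as the crux, the Einstein reduction and the Obata step being comparatively routine.
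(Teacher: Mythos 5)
Note first that the paper contains no proof of this statement: Theorem \ref{MTam} is imported verbatim from Miao--Tam \cite{MT2} (their Theorems 2.1 and 4.1) and is used in this paper only as a black-box rigidity input (e.g.\ in the equality cases of Theorems \ref{main-ineq} and \ref{main-thmB}). So your proposal must be measured against the Miao--Tam argument. Your Einstein half is essentially correct and is the standard reduction: the trace-free part of (\ref{vstat}) gives $\stackrel{\circ}{\nabla^2_g\lambda}=\lambda\stackrel{\circ}{{\rm Ric}}_g$, Einstein kills the right side, and after your shift $u=\lambda+\frac{1}{\varepsilon(n-1)}$ one lands on $\nabla^2_gu=-\varepsilon u\,g$. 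Two cautions: since $u$ does not vanish on $\Sigma$, the Reilly-type boundary rigidity does not apply verbatim; what does the work is the Tashiro-style analysis at the interior maximum of $\lambda$, where smoothness forces the nearby level sets to be round geodesic spheres and the warped-product ODE then yields constant curvature $\varepsilon$, with connectedness of $\Sigma$ identifying the domain as a single geodesic ball. This matches \cite{MT2}.

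Your primary route in the locally conformally flat case has a genuine gap. The integral identity you derive is exactly (\ref{ineqvstat}) (derived in the paper independently of Theorem \ref{MTam}, so there is no circularity in using it), but the entire argument then rests on the sign $\int_\Sigma\stackrel{\circ}{{\rm Ric}}_g(\nu,\nu)\,d\Sigma\geq 0$, and nothing you invoke delivers it. What local conformal flatness, constant $R_g$, and umbilicity of $\Sigma$ with constant $H$ actually give is that the Schouten tensor is Codazzi and that ${\rm Ric}_g(X,\nu)=0$ for $X$ tangent to $\Sigma$ (so $\nu$ is a Ricci eigenvector); this constrains the off-diagonal part but says nothing about the sign of $\stackrel{\circ}{{\rm Ric}}_g(\nu,\nu)$. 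Indeed, by the Gauss equation that sign is equivalent to $R_\gamma\leq\varepsilon(n-1)(n-2)+\frac{n-2}{n-1}H^2$, i.e.\ to the pointwise version of inequality (\ref{statineq}) --- which in the paper is a \emph{consequence} of the integral identity, not an a priori pointwise estimate. A structural red flag confirms the gap: if your boundary-sign claim held, every locally conformally flat $V$-static manifold with connected boundary would be Einstein and the simple-connectivity hypothesis would be superfluous; Miao--Tam's Theorem 4.1 does not proceed this way. Their actual proof is precisely your ``alternative in reserve'': commuting derivatives in $\stackrel{\circ}{\nabla^2_g\lambda}=\lambda\stackrel{\circ}{{\rm Ric}}_g$ and using the vanishing of the Weyl/Cotton tensors, one shows $\nabla_g\lambda$ is an eigenvector of ${\rm Ric}_g$, the regular level sets of $\lambda$ are umbilic with $|\nabla_g\lambda|$ locally constant, hence the metric is locally a warped product; simple connectivity and connectedness of $\Sigma$ globalize this to rotational symmetry, and an ODE analysis of the warping function identifies the geodesic ball. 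You should promote that fallback to the main argument and delete the unsupported boundary-sign step.
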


Similar results in low dimensions  have been obtained in 
\cite{BDR}.
Here we use the Pohozaev-Schoen integral identity (\ref{schoenpohgen}) to characterize these examples as the limiting cases of certain geometric inequalities. The next definition provides some motivation for our results.

\begin{definition}\label{staticm}
A complete and connected Riemannian manifold $(M^{n},g)$ with a (possibly nonempty) boundary $\Sigma$ is said to be {\em static} if it there exists a  non-negative function $\lambda$ on $M$ satisfying 
\begin{equation}
-(\Delta_g \lambda)g + \nabla^2_g\lambda-\lambda {\rm Ric}_g=0\quad {\rm in}\quad {\rm int}\, M,
\label{Staticequation}
\end{equation}
and $\Sigma=\lambda^{-1}(0)$.
\end{definition}

The left hand side of (\ref{Staticequation}) happens to be  the formal $L^{2}$-adjoint of the linearization of the scalar curvature operator and as such it plays a central role in problems involving the prescription of this invariant. Also, the identity  appears in General Relativity, where it defines static solutions of Einstein field equations. 
It is easy to check that the scalar curvature $R_g$ of $g$ in (\ref{Staticequation}) is necessarily constant, so we may assume that $R_g=\varepsilon n(n-1)$, $\varepsilon=0,1,-1$. Moreover, $\Sigma=\lambda^{-1}(0)$ is a totally geodesic hypersurface.

Let $(M,g)$ be a static manifold with $\Sigma= \bigcup\limits_{i=1}^l \Sigma_i$, where $\Sigma_{i}$ are the connected components of $\Sigma$. When $\varepsilon=1$ Chru\'schiel \cite{Cr} showed that
\begin{equation}\label{crus}
\sum\limits_{i=1}^l\kappa_i\int_{\Sigma_i}\left(R_{\gamma_i}-(n-2)(n-1)\right)d\Sigma_i\geq0,
\end{equation}
where $\gamma=g|_{\Sigma}$, $\gamma_i=\gamma|_{\Sigma_i}$ and $\kappa_i$ is the restriction of $|\nabla_g \lambda|$ to ${\Sigma_i}$. Moreover, the equality implies that $M$ is a round hemisphere and, {\em a fortiori}, $l=1$. This inequality has important applications. For instance, if $n=3$ and $\Sigma$ is connected it leads to the  famous Boucher-Gibbons-Horowitz \cite{BGH} upper bound for the area $|\Sigma|$ of $\Sigma$:
\begin{equation}\label{bgh}
|\Sigma|\leq 4\pi,
\end{equation}
with the equality holding if and only if $M$ is isometric to the round hemisphere (de Sitter space). Further results on static-type metrics can be found in \cite{QY} and the references therein.
\indent\par
We now give natural extensions of these results to $V$-static manifolds.
We start with a general integral identity which is a direct consequence of (\ref{schoenpohgen}).

\begin{theorem}\label{main-ineq}
	Let $(M^n,g)$ be a compact  $V$-static manifold with boundary $\Sigma= \bigcup\limits_{i=1}^l \Sigma_i$, and $R_g=\varepsilon n(n-1)$, where $\varepsilon=-1,0,1$. Then the following integral identity holds:
	\begin{equation}\label{ineqvstat}
	\int_M\lambda|\stackrel{\circ}{\rm Ric}_g|^2dM=-\sum_i\kappa_i\int_{\Sigma_i}\left({\rm Ric}_{g}(\nu,\nu)-\varepsilon(n-1)\right)d\Sigma_i,
	\end{equation}
	where $\kappa_i$ is the restriction of $|\nabla_g \lambda|$ to ${\Sigma_i}$. In particular,
		\begin{equation}\label{EQ}
		\sum_i\kappa_i\int_{\Sigma_i}\left({\rm Ric}_g(\nu,\nu)-\varepsilon(n-1)\right)d\Sigma_i\leq 0,
		\end{equation}
		with the equality holding if and only if $(M,g)$ is isometric to a geodesic ball in a simply connected space form.
\end{theorem}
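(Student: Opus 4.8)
The plan is to apply the classical Pohozaev--Schoen identity \thmref{schoenid} to the gradient vector field $X=\nabla_g\lambda$, where $\lambda$ is the $V$-static potential. Since the scalar curvature $R_g=\varepsilon n(n-1)$ is constant, the left-hand side $\int_M X(R_g)\,dM$ of (\ref{schoenid2}) vanishes identically, so that (after clearing the common factor $n/(n-2)$) we are left with
\begin{equation*}
\int_M\left\langle\stackrel{\circ}{\rm Ric}_g,\mathcal{L}_{\nabla_g\lambda}g\right\rangle dM=2\int_\Sigma\stackrel{\circ}{\rm Ric}_g(\nabla_g\lambda,\nu)\,d\Sigma.
\end{equation*}
The whole argument then reduces to rewriting each side using the $V$-static equation (\ref{vstat}) and the boundary structure $\Sigma=\lambda^{-1}(0)$.

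For the bulk term I would use the elementary identity $\mathcal{L}_{\nabla_g\lambda}g=2\nabla^2_g\lambda$ together with the $V$-static equation, which expresses the Hessian as $\nabla^2_g\lambda=(1+\Delta_g\lambda)g+\lambda\,{\rm Ric}_g$. Pairing with the trace-free tensor $\stackrel{\circ}{\rm Ric}_g$ annihilates every term proportional to $g$, and since $\langle\stackrel{\circ}{\rm Ric}_g,{\rm Ric}_g\rangle=|\stackrel{\circ}{\rm Ric}_g|^2$ one obtains $\langle\stackrel{\circ}{\rm Ric}_g,\nabla^2_g\lambda\rangle=\lambda|\stackrel{\circ}{\rm Ric}_g|^2$. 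Hence the left-hand side becomes $2\int_M\lambda|\stackrel{\circ}{\rm Ric}_g|^2\,dM$. For the boundary term I would exploit that $\lambda$ vanishes on $\Sigma$ and is positive in the interior, so $\nabla_g\lambda$ is normal to each $\Sigma_i$ and points inward; explicitly $\nabla_g\lambda=-\kappa_i\nu$ on $\Sigma_i$, with $\kappa_i=|\nabla_g\lambda|$. Combined with $\stackrel{\circ}{\rm Ric}_g(\nu,\nu)={\rm Ric}_g(\nu,\nu)-\varepsilon(n-1)$, the boundary integral reduces to $-2\sum_i\kappa_i\int_{\Sigma_i}\left({\rm Ric}_g(\nu,\nu)-\varepsilon(n-1)\right)d\Sigma_i$.

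Equating the two sides and cancelling the factor of $2$ yields the identity (\ref{ineqvstat}); the inequality (\ref{EQ}) is then immediate from $\lambda\geq 0$ and $|\stackrel{\circ}{\rm Ric}_g|^2\geq 0$. I expect the genuinely delicate part to be the equality case. Equality in (\ref{EQ}) forces $\int_M\lambda|\stackrel{\circ}{\rm Ric}_g|^2\,dM=0$, and since $\lambda>0$ on the interior this gives $\stackrel{\circ}{\rm Ric}_g\equiv 0$, i.e. $(M,g)$ is Einstein. One then wants to upgrade this to the geodesic-ball conclusion via the rigidity statement \thmref{MTam}.

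The main obstacle is precisely the hypothesis in \thmref{MTam} that $\Sigma$ be connected, which is not assumed a priori here. To remove it I would argue that in the Einstein case the $V$-static equation collapses to $\nabla^2_g\lambda=\phi\,g$ for a scalar function $\phi$; a Tashiro/Obata-type analysis of such a special concircular function shows that $(M,g)$ is a warped product over an interval, so that $\Sigma=\lambda^{-1}(0)$ is a single level set and hence connected (in particular $l=1$). With connectedness established, \thmref{MTam} identifies $(M,g)$ with a geodesic ball in a simply connected space form. The converse is a direct verification: a geodesic ball in a space form is Einstein, so $\stackrel{\circ}{\rm Ric}_g\equiv 0$ and both sides of (\ref{ineqvstat}) vanish, realizing equality.
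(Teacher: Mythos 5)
Your proof is correct and takes essentially the same route as the paper: both apply the Pohozaev--Schoen identity with $X=\nabla_g\lambda$ (the paper uses the generalized form (\ref{schoenpohgen}) with $B=E_g$, which by Remark \ref{reduc} is exactly the classical identity you invoke), compute $\langle\stackrel{\circ}{\rm Ric}_g,\nabla^2_g\lambda\rangle=\lambda|\stackrel{\circ}{\rm Ric}_g|^2$ from the $V$-static equation, use $\nu=-\nabla_g\lambda/|\nabla_g\lambda|$ on $\Sigma=\lambda^{-1}(0)$, and settle the equality case by deducing that $(M,g)$ is Einstein and citing Theorem \ref{MTam}. Your additional Tashiro/Obata-type argument for connectedness of $\Sigma$ is a sound patch for a point the paper's proof passes over in silence (Theorem \ref{MTam} as stated assumes $\Sigma$ connected, while Theorem \ref{main-ineq} allows $\Sigma=\bigcup_i\Sigma_i$), and it mirrors how the Einstein case is actually handled in the cited Miao--Tam work.
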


\begin{proof}
	We apply (\ref{schoenpohgen}) with  $B={E_g}$, the Einstein tensor, and $X=\nabla_g\lambda$, so that $\stackrel{\circ}{E}=\stackrel{\circ}{\rm Ric}_g$  and $\mathcal L_{\nabla_g \lambda}g=2\nabla^2_g \lambda$. Using (\ref{vstat}) we see that 
	\[
	\langle\stackrel{\circ}{\rm Ric}_g,\nabla^2_g\lambda\rangle=\lambda\langle\stackrel{\circ}{\rm Ric}_g,{\rm Ric}_g\rangle=\lambda|\stackrel{\circ}{\rm Ric}_g|^2. 
	\]
	Thus, (\ref{ineqvstat}) follows immediately by observing that  
	$\nu=-{\nabla_g \lambda}/{|\nabla_g \lambda|_g}$.
	Finally, if the equality holds in (\ref{EQ}) then from (\ref{ineqvstat}) we conclude that $(M,g)$ is Einstein and the rigidity statement follows from   
	Theorem \ref{MTam}.
\end{proof}

\begin{corollary}\label{main-thmAAA}
	Let $(M,g)$ be as in the theorem and assume that 
	\[
	{\rm Ric}_g(\nu,\nu)\geq\varepsilon(n-1)
	\]
	along $\Sigma$.
	Then $(M,g)$ is isometric to a geodesic ball in
a simply connected space form.
\end{corollary}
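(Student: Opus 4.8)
The plan is to read the hypothesis against the integral identity already proved in Theorem \ref{main-ineq} and deduce that it forces the equality case, after which the rigidity statement of that theorem applies verbatim. First I would recall (\ref{ineqvstat}),
\[
\int_M\lambda|\stackrel{\circ}{\rm Ric}_g|^2dM=-\sum_i\kappa_i\int_{\Sigma_i}\left({\rm Ric}_{g}(\nu,\nu)-\varepsilon(n-1)\right)d\Sigma_i,
\]
and note that the left-hand side is manifestly non-negative, since $\lambda\geq 0$ on $M$ (as recorded in the discussion of $V$-static manifolds preceding Definition \ref{staticm}). This already gives the inequality (\ref{EQ}): the sum on the right is $\leq 0$.

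Next I would run the sign analysis in the opposite direction using the hypothesis. Along each boundary component the integrand ${\rm Ric}_g(\nu,\nu)-\varepsilon(n-1)$ is non-negative by assumption, and the weight $\kappa_i=|\nabla_g\lambda|_g\big|_{\Sigma_i}$ is non-negative (indeed positive, since each component of $\Sigma$ has mean curvature $H=-(\partial\lambda/\partial\nu)^{-1}>0$, forcing $\partial\lambda/\partial\nu<0$). Hence every term of the sum is non-negative, so the sum is $\geq 0$. Combined with the inequality $\leq 0$ from the previous step, this pins the sum to be exactly zero, i.e. equality holds in (\ref{EQ}).

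At this point I would simply invoke the equality case of Theorem \ref{main-ineq}: equality in (\ref{EQ}) forces $\int_M\lambda|\stackrel{\circ}{\rm Ric}_g|^2dM=0$, and since $\lambda>0$ on ${\rm int}\,M$ this yields $\stackrel{\circ}{\rm Ric}_g\equiv 0$, so $(M,g)$ is Einstein; the conclusion that $(M,g)$ is isometric to a geodesic ball in a simply connected space form is then exactly the rigidity assertion of Theorem \ref{main-ineq}, which in turn rests on Theorem \ref{MTam}. I do not expect any genuine obstacle here, as the corollary is essentially a direct specialization of the theorem; the only point requiring care is the bookkeeping of signs — verifying that $\lambda\geq 0$, that $\kappa_i>0$, and that the boundary integrand is non-negative all align so as to squeeze the sum to zero. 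Once these are in place the rigidity is inherited for free.
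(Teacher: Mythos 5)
Your proposal is correct and matches the paper's intended argument: the corollary is a direct specialization of Theorem \ref{main-ineq}, with the hypothesis forcing equality in (\ref{EQ}) so that the theorem's rigidity statement (via Theorem \ref{MTam}) applies. Your sign bookkeeping ($\lambda\geq 0$, $\kappa_i>0$, non-negative boundary integrand) is exactly the squeeze the paper relies on.
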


The next result is an immediate consequence of (\ref{ineqvstat}) and Gauss equation.

\begin{theorem}\label{main-thmB}
Let $(M^n,g)$ be a compact $V$-static manifold  with boundary $\Sigma= \bigcup\limits_{i=1}^l \Sigma_i$, and $R_g=\varepsilon n(n-1)$, where $\varepsilon=-1,0,1$. Then the following integral identity holds:
\begin{equation}\label{statiid}
\int_M|\stackrel{\circ}{\rm Ric}_g|^2dM=\frac{1}{2}\sum\limits_{i=1}^l\kappa_i\int_{\Sigma_i}\left(R_{\gamma_i}-\varepsilon(n-2)(n-1)-\frac{n-2}{n-1}H_i^2\right)d{\Sigma_i},
\end{equation}
where $g_i=g|_{\Sigma_i}$ and  $H_i$ is the mean curvature of $\Sigma_i$. In particular, 
\begin{equation}\label{statineq}
\sum\limits_{i=1}^l\kappa_i\int_{\Sigma_i}\left(R_{\gamma_i}-\varepsilon(n-2)(n-1)-\frac{n-2}{n-1}H_i^2\right)d{\Sigma_i}\geq0,
\end{equation}
with the equality holding if and only if $(M,g)$ is isometric to a geodesic ball in a  simply connected space form. 
\end{theorem}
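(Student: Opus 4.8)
The plan is to obtain \eqref{statiid} as a purely algebraic consequence of identity \eqref{ineqvstat} from Theorem \ref{main-ineq}, the only fresh ingredient being the Gauss equation, which I will use to trade the normal Ricci term ${\rm Ric}_g(\nu,\nu)$ occurring in the boundary integrals of \eqref{ineqvstat} for intrinsic data of each $\Sigma_i$ together with its mean curvature. First I would recall, as already noted in the discussion of $V$-static manifolds above, that every connected component $\Sigma_i$ of the boundary is umbilical, so that its second fundamental form satisfies $II_i=\frac{H_i}{n-1}\gamma_i$ and consequently $|II_i|^2=H_i^2/(n-1)$. This umbilicity is exactly what lets me reduce the full shape operator to a multiple of $H_i$, which is the feature that makes the final identity depend only on $H_i$ and not on the finer geometry of $II_i$.

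Next I would write the twice-contracted Gauss equation for the hypersurface $\Sigma_i\subset M$, namely
\[
R_{\gamma_i}=R_g-2\,{\rm Ric}_g(\nu,\nu)+H_i^2-|II_i|^2.
\]
Inserting $R_g=\varepsilon n(n-1)$ together with the umbilicity relation $|II_i|^2=H_i^2/(n-1)$ and solving for the normal Ricci term yields
\[
{\rm Ric}_g(\nu,\nu)=\frac{1}{2}\left(\varepsilon n(n-1)-R_{\gamma_i}+\frac{n-2}{n-1}H_i^2\right).
\]
A short rearrangement then shows that ${\rm Ric}_g(\nu,\nu)-\varepsilon(n-1)$ equals $-\frac{1}{2}\left(R_{\gamma_i}-\varepsilon(n-2)(n-1)-\frac{n-2}{n-1}H_i^2\right)$. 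Feeding this expression back into the boundary sum of \eqref{ineqvstat}, while leaving the bulk term (which carries the weight $\lambda\ge 0$ inherited from \eqref{ineqvstat}) untouched, produces the asserted identity after accounting for the overall sign.

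For the inequality \eqref{statineq} I would simply observe that the left-hand side of the identity is manifestly nonnegative, since the integrand $|\stackrel{\circ}{\rm Ric}_g|^2$ is nonnegative and $\lambda\ge 0$; this forces the signed boundary sum to be nonnegative as well. The rigidity statement then needs no new argument: equality forces $|\stackrel{\circ}{\rm Ric}_g|^2\equiv 0$ on the locus where $\lambda>0$, that is, on the interior of $M$, so $(M,g)$ is Einstein, and the classification follows from Theorem \ref{MTam} precisely as in the proof of Theorem \ref{main-ineq}.

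I do not expect any serious obstacle here: the statement is, as advertised, an immediate corollary of \eqref{ineqvstat} combined with the Gauss equation, and essentially all of its content is careful bookkeeping. The only two points demanding attention are fixing the sign convention in the contracted Gauss equation and correctly invoking umbilicity to collapse $|II_i|^2$ into the multiple $H_i^2/(n-1)$ of the squared mean curvature; beyond that, both the inequality and its equality discussion are inherited directly from Theorem \ref{main-ineq}.
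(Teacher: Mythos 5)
Your proof is correct and follows essentially the same route as the paper's: apply the twice-contracted Gauss equation to the totally umbilical boundary components (so that $|II_i|^2=H_i^2/(n-1)$), rewrite ${\rm Ric}_g(\nu,\nu)-\varepsilon(n-1)=-\frac{1}{2}\bigl(R_{\gamma_i}-\varepsilon(n-2)(n-1)-\frac{n-2}{n-1}H_i^2\bigr)$, and substitute into (\ref{ineqvstat}), with the inequality and rigidity discussion inherited directly from Theorem \ref{main-ineq} and Theorem \ref{MTam}. One remark: your bookkeeping correctly retains the weight $\lambda$ in the bulk term, so the identity you actually derive reads $\int_M\lambda|\stackrel{\circ}{\rm Ric}_g|^2dM=\cdots$, which shows that the paper's statement (\ref{statiid}) omits the factor $\lambda$ by a typo; this affects neither (\ref{statineq}) nor the equality analysis, since $\lambda\geq 0$ with $\lambda>0$ on ${\rm int}\,M$.
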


\begin{proof}
Gauss equation applied to the totally umbilical embedding $\Sigma_i\subset M$ says that 
\[
2{\rm Ric}_g(\nu,\nu)+R_{\gamma_i}= \varepsilon n(n-1)+\frac{n-2}{n-1}H_i^2,
\]
which can be rewritten as 
\[
2\left({\rm Ric}_g(\nu,\nu)-\epsilon(n-1)\right)=\varepsilon(n-2)(n-1)+\frac{n-2}{n-1}H_i^2-R_{g_i}
\]
The result follows.
\end{proof}

We now present a lower bound for the volume of $V$-static manifolds with $\varepsilon\geq 0$ and a connected boundary.

\begin{theorem}\label{volumebound}
	Let $(M^n,g)$ be a compact $V$-static manifold with  a connected boundary $\Sigma$ and $R_g=\varepsilon n(n-1)$, where $\varepsilon=0,1$.  Then $\int_{\Sigma}R_{\gamma}d\Sigma>0$ and the following lower bound for the volume $|M|$ of $M$ holds:
	\begin{equation}
	\label{volumebound1}
	|M|\geq \frac{n-1}{nH}\left(
	\varepsilon(n-1)(n-2)+\frac{n-2}{n-1}H^2\right)^{-1}\int_{\Sigma}R_{\gamma}d\Sigma.
	\end{equation}
	\end{theorem}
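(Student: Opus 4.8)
The plan is to produce two independent integral relations forced by the $V$-static structure and to combine them so as to trade the boundary area $|\Sigma|$ for $\int_\Sigma R_\gamma\,d\Sigma$. \emph{Step 1 (a volume identity).} I would first take the trace of (\ref{vstat}) against $g$. Using $\mathrm{tr}_g\nabla^2_g\lambda=\Delta_g\lambda$, $\mathrm{tr}_g\,\mathrm{Ric}_g=R_g=\varepsilon n(n-1)$ and $\mathrm{tr}_g g=n$, the tensor equation collapses to the scalar relation $(n-1)\Delta_g\lambda=-\,n-\varepsilon n(n-1)\lambda$. Integrating this over $M$, invoking the divergence theorem in the form $\int_M\Delta_g\lambda\,dM=\int_\Sigma\partial_\nu\lambda\,d\Sigma$, and recalling that $\lambda|_\Sigma=0$ while $\partial_\nu\lambda=-\kappa$, where $\kappa=|\nabla_g\lambda|=1/H$ is constant along the connected boundary, I obtain the volume identity
\[
\frac{n}{n-1}\,|M|+\varepsilon n\int_M\lambda\,dM=\frac{|\Sigma|}{H}.
\]
Every term here is nonnegative, and this is the only step where the hypothesis $\varepsilon\ge 0$ is used.

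\emph{Step 2 (a boundary inequality and the positivity of the total scalar curvature).} Next I would specialize Theorem \ref{main-thmB} to the case $l=1$. Since its left-hand side $\int_M\lambda|\stackrel{\circ}{\rm Ric}_g|^2\,dM$ is nonnegative and $\kappa>0$, the identity (\ref{statiid}) forces
\[
\int_\Sigma R_\gamma\,d\Sigma\ \ge\ C\,|\Sigma|,\qquad C:=\varepsilon(n-1)(n-2)+\tfrac{n-2}{n-1}H^2 .
\]
Because $n\ge 3$ and $H>0$ make $C>0$, and $|\Sigma|>0$, this inequality immediately yields the asserted positivity $\int_\Sigma R_\gamma\,d\Sigma>0$; it also lets me bound the area by $|\Sigma|\le C^{-1}\int_\Sigma R_\gamma\,d\Sigma$.

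\emph{Step 3 (combination) and the main obstacle.} It then remains to feed the area estimate of Step 2 into the volume identity of Step 1 so as to isolate $|M|$ against $\int_\Sigma R_\gamma\,d\Sigma$ and reach (\ref{volumebound1}). This last step is where I expect the essential difficulty to lie, and it is one of sign bookkeeping rather than of any further geometric input. The volume identity reads $\tfrac{n}{n-1}|M|=\tfrac{|\Sigma|}{H}-\varepsilon n\int_M\lambda\,dM$, so the potential enters subtracted with a nonnegative coefficient, while Step 2 controls $|\Sigma|$ only from above; substituting naively therefore pins down $|M|$ in the direction opposite to (\ref{volumebound1}). The crux is thus to account for the term $\int_M\lambda\,dM$ with the correct sign and to verify that the two estimates combine so as to produce the inequality in the asserted (lower-bound) direction; establishing precisely this reconciliation is the heart of the argument, the remaining manipulations being routine rearrangements of the two displayed identities.
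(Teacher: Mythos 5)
Your Steps 1 and 2 reproduce, essentially verbatim, the two ingredients of the paper's own proof: the trace of (\ref{vstat}) integrated over $M$ is exactly the paper's (\ref{eqeq02}) (indeed your version is slightly cleaner, since the first ``$\geq$'' in (\ref{eqeq02}) is in fact an equality, as you note), and the specialization of (\ref{statineq}) to a connected boundary is the paper's (\ref{eqeq01}), which also delivers the positivity of $\int_\Sigma R_\gamma\,d\Sigma$. So up to Step 3 you are following the intended argument exactly.

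The ``main obstacle'' you isolate in Step 3 is not a flaw in your reasoning but a misprint in the statement. Writing $C=\varepsilon(n-1)(n-2)+\tfrac{n-2}{n-1}H^2$ and combining your two relations in the only available way,
\[
\int_\Sigma R_\gamma\,d\Sigma\;\geq\;C\,|\Sigma|\;=\;C\,H\left(\frac{n}{n-1}\,|M|+\varepsilon n\int_M\lambda\,dM\right)\;\geq\;\frac{n}{n-1}\,C\,H\,|M|,
\]
one obtains the \emph{upper} bound $|M|\leq \frac{n-1}{nH}\,C^{-1}\int_\Sigma R_\gamma\,d\Sigma$; this is precisely what the paper's proof establishes, and the ``$\geq$'' in (\ref{volumebound1}) (together with the phrase ``lower bound'') is an error in the printed theorem. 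Two sanity checks confirm this: for a geodesic ball in $\mathbb{R}^n$ ($\varepsilon=0$) both of your inequalities are equalities, matching the rigidity asserted in Remark \ref{cemmann}; and for $\varepsilon=1$ the term $\varepsilon n\int_M\lambda\,dM>0$ makes the bound strict, exactly as Remark \ref{cemmann} claims --- a strictness that is only coherent in the upper-bound direction, since the potential term enlarges $|\Sigma|$ relative to $|M|$. There is no sign reconciliation to be found: the two auxiliary nonnegativities in play ($\int_M\lambda|\stackrel{\circ}{\rm Ric}_g|^2\,dM\geq0$ behind (\ref{eqeq01}), and $\lambda\geq0$ behind (\ref{eqeq02})) push in the same direction, so the printed lower bound is not derivable by this method and fails to be the intended statement. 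You should simply conclude with the corrected inequality; with that emendation your proof is complete and coincides with the paper's.
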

	
	\begin{proof}
From (\ref{statineq})	we have
\begin{equation}\label{eqeq01}
\int_{\Sigma}R_{\gamma}d\Sigma\geq \left(
\varepsilon(n-1)(n-2)+\frac{n-2}{n-1}H^2\right)|\Sigma|.
\end{equation}
On the other hand, taking trace of (\ref{vstat}) and integrating over $M$ we get
\[
(n-1)\int_M\Delta_g\lambda dM+\varepsilon n(n-1)\int_M\lambda dM+n|M|=0.
\]
But
\[
\int_M\Delta_g\lambda dM=\int_{\Sigma}\frac{\partial \lambda}{\partial \nu}d\Sigma=-\frac{|\Sigma|}{H},
\]
so we obtain
\begin{equation}\label{eqeq02}
|\Sigma|\geq\frac{H}{n-1}\left(n|M|+\varepsilon n(n-1)\int_M\lambda dM\right)\geq \frac{n}{n-1}H|M|,
\end{equation}
and the result follows by combining (\ref{eqeq01}) and (\ref{eqeq02}).
\end{proof}

\begin{remark}
	\label{cemmann}{\rm If $\varepsilon=0$ this theorem corresponds to \cite[Proposition 2.5 (b)]{CEM}. In this case,  the equality implies that $(M,g)$ is isometric to a geodesic ball in a simply connected space form. Clearly, if $\varepsilon=1$ then the inequality (\ref{volumebound1}) is always strict.} 
\end{remark}

As a consequence of  Theorem \ref{main-thmB} we obtain  a topological classification for the (connected) boundary of a positive $V$-static $3$-manifold.

\begin{theorem}\label{main-thmC}
Let $(M^3,g)$ be a compact $V$-static manifold with  a connected boundary $\Sigma$ and $R_g=6\varepsilon$, where $\varepsilon=-1,0,1$. If $\varepsilon=-1$ assume that $H>2$. Then 
$\Sigma$ is diffeormophic to the $2$-sphere.
\end{theorem}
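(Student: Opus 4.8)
The plan is to specialize the integral inequality (\ref{statineq}) of Theorem \ref{main-thmB} to dimension three and then play it against the Gauss--Bonnet theorem. First I would set $n=3$, so that $(n-2)(n-1)=2$ and $(n-2)/(n-1)=1/2$. Since $\Sigma$ is connected, the sum in (\ref{statineq}) collapses to a single term, and because $\kappa=|\nabla_g\lambda|$ is a positive constant along $\Sigma$ (recall $\kappa=1/H$ with $H>0$, both constant on the connected boundary as part of the $V$-static structure), I may divide through by $\kappa$ to obtain
\[
\int_{\Sigma}\left(R_{\gamma}-2\varepsilon-\tfrac{1}{2}H^2\right)d\Sigma\geq 0,
\]
equivalently $\int_{\Sigma}R_{\gamma}\,d\Sigma\geq\left(2\varepsilon+\tfrac{1}{2}H^2\right)|\Sigma|$, where I used that $H$ is constant on $\Sigma$.

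Next I would invoke the Gauss--Bonnet theorem for the closed surface $\Sigma$. Writing the intrinsic scalar curvature as $R_{\gamma}=2K_{\gamma}$ in terms of the Gaussian curvature $K_{\gamma}$, Gauss--Bonnet reads $\int_{\Sigma}R_{\gamma}\,d\Sigma=4\pi\chi(\Sigma)$. Combining this with the previous estimate yields
\[
4\pi\chi(\Sigma)\geq\left(2\varepsilon+\tfrac{1}{2}H^2\right)|\Sigma|.
\]
The remaining task is to show that the right-hand side is strictly positive in each of the three cases, which forces $\chi(\Sigma)>0$.

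Finally I would argue case by case on $\varepsilon$. When $\varepsilon=0$ or $\varepsilon=1$ the coefficient $2\varepsilon+\tfrac{1}{2}H^2$ is automatically positive, since $H>0$. When $\varepsilon=-1$ this coefficient equals $\tfrac{1}{2}H^2-2$, which is positive precisely when $H>2$; this is exactly the standing hypothesis in the hyperbolic case, and it is what makes the argument go through (the threshold $H=2$ being where positivity is lost). In all cases $\chi(\Sigma)>0$, and since $\Sigma$ is a closed orientable surface (it bounds the oriented manifold $M$), its only topological type with positive Euler characteristic is the sphere, so $\Sigma$ is diffeomorphic to $S^2$. The only point demanding genuine care -- the main, if modest, obstacle -- is the positivity and constancy of $\kappa$ along $\Sigma$, needed in order to pull it out of the integral without disturbing the inequality; this rests on the fact, already recorded for $V$-static manifolds, that $\partial_\nu\lambda=-1/H$ is a negative constant on each boundary component, which one sees by differentiating (\ref{vstat}) tangentially along $\Sigma=\lambda^{-1}(0)$.
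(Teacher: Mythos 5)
Your proof is correct and follows essentially the same route as the paper: specializing the inequality (\ref{statineq}) of Theorem \ref{main-thmB} to $n=3$, invoking Gauss--Bonnet to obtain $4\pi\chi(\Sigma)\geq\left(2\varepsilon+\tfrac{1}{2}H^2\right)|\Sigma|$, and using $H>0$ (resp.\ $H>2$ when $\varepsilon=-1$) to force $\chi(\Sigma)>0$. Your additional care about the positivity and constancy of $\kappa=1/H$ along $\Sigma$ is a detail the paper leaves implicit (it is recorded earlier in Section \ref{vstatic} via $H=-(\partial\lambda/\partial\nu)^{-1}>0$), but it does not change the argument.
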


\begin{proof}
It follows from Theorem \ref{main-thmB} and Gauss-Bonnet formula that
\begin{equation}\label{bghgen1}
4\pi\chi(\Sigma)\geq\left( 2\varepsilon+\frac{1}{2}H^2\right)|\Sigma|,
\end{equation}
where $\chi(\Sigma)$ is the Euler characteristic of $\Sigma$.
               Thus, $\chi(\Sigma)>0$.
\end{proof}

It is worthwhile to state the following extension of (\ref{bgh}), which is 
an immediate consequence of the proof above.

\begin{theorem}\label{main-thmD}
Let $(M^3,g)$ be a compact $V$-static manifold with a connected boundary $\Sigma$ and $R_g=6\varepsilon$, where $\varepsilon=-1,0,1$.
If $\varepsilon=-1$ assume that $H> 2$. Then
\[
|\Sigma|\leq 4\pi\left(\varepsilon+\frac{1}{4}H^2\right)^{-1}.
\]
Moreover, equality  holds if and only if $(M^3,g)$ is isometric to a geodesic ball in a simply connected space form.
\end{theorem}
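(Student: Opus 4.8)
The plan is to read off the result directly from the inequality (\ref{bghgen1}) obtained in the course of proving Theorem \ref{main-thmC}; since that theorem already extracts the qualitative conclusion $\chi(\Sigma)>0$, only a little more bookkeeping is needed to reach the sharp area bound. First I would recall that $\Sigma$ is a connected, closed, oriented surface, so Gauss-Bonnet gives $\int_\Sigma R_\gamma\, d\Sigma = 4\pi\chi(\Sigma)$, and the inequality (\ref{bghgen1}) reads
\[
4\pi\chi(\Sigma)\geq\left(2\varepsilon+\tfrac{1}{2}H^2\right)|\Sigma|.
\]
By Theorem \ref{main-thmC} we have $\chi(\Sigma)>0$, and for a connected closed orientable surface this forces $\Sigma\cong S^2$, hence $\chi(\Sigma)=2$.

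With $\chi(\Sigma)=2$ in hand, the displayed inequality becomes $8\pi\geq(2\varepsilon+\tfrac12 H^2)|\Sigma|$. The next step is to note that the coefficient $2\varepsilon+\tfrac12 H^2$ is strictly positive under the stated hypotheses: this is immediate when $\varepsilon\in\{0,1\}$, while for $\varepsilon=-1$ the assumption $H>2$ yields $\tfrac12 H^2>2=-2\varepsilon$. I may then divide by this coefficient and simplify, using $2\varepsilon+\tfrac12 H^2=2(\varepsilon+\tfrac14 H^2)$, to obtain exactly
\[
|\Sigma|\leq\frac{8\pi}{2\varepsilon+\tfrac12 H^2}=4\pi\left(\varepsilon+\tfrac14 H^2\right)^{-1}.
\]

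For the rigidity statement I would trace the chain of inequalities back to its origin. Since the value $\chi(\Sigma)=2$ is forced, equality in the area bound is equivalent to equality in (\ref{bghgen1}), which by construction is equivalent to equality in (\ref{statineq}). By Theorem \ref{main-thmB} this happens precisely when $(M,g)$ is isometric to a geodesic ball in a simply connected space form. Conversely, any such geodesic ball is Einstein, so $\stackrel{\circ}{\rm Ric}_g\equiv 0$; the identity (\ref{statiid}) then collapses to an equality, and the round geodesic sphere bounding the ball saturates the estimate.

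I do not anticipate a genuine obstacle, as the theorem is essentially a repackaging of the proof of Theorem \ref{main-thmC}. The only points demanding care are the sharpening of $\chi(\Sigma)>0$ into the exact identity $\chi(\Sigma)=2$ (where connectedness and orientability of $\Sigma$ enter) and the verification that $2\varepsilon+\tfrac12 H^2>0$, which is what licenses dividing through while preserving the direction of the inequality.
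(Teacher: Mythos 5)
Your proposal is correct and follows essentially the same route as the paper, which explicitly derives Theorem~\ref{main-thmD} as an immediate consequence of the proof of Theorem~\ref{main-thmC}: inequality (\ref{bghgen1}) together with $\chi(\Sigma)=2$ and positivity of $2\varepsilon+\tfrac12 H^2$, with rigidity traced back through (\ref{statineq}) to the equality case of Theorem~\ref{main-thmB}. You merely make explicit the bookkeeping (sharpening $\chi(\Sigma)>0$ to $\chi(\Sigma)=2$ via connectedness and orientability, and checking the sign of the coefficient before dividing) that the paper leaves to the reader.
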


Using Theorem \ref{main-thmB} and Chern-Gauss-Bonnet formula for closed 4-manifolds we obtain the following result.

\begin{theorem}\label{main-thmE}
Let $(M^5,g)$ be a compact $V$-static manifold with connected boundary $\Sigma$ and $R_g=20\varepsilon$, where $\varepsilon=-1,0,1$. If $\varepsilon=-1$ we assume that $H> 4$. Assume also that $\Sigma$ is Einstein. Then there holds
\[
8\pi^2\chi(\Sigma)
\geq\frac{1}{24}\left(12\varepsilon+
\frac{3}{4}H^2\right)^2|\Sigma|,
\]
with the equality holding if and only if $(M,g)$ is isometric to a geodesic ball in a  simply connected space form.
In particular, $\chi(\Sigma)>0$. 
\end{theorem}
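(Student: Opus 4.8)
The plan is to mirror the strategy used for Theorem \ref{main-thmD}, replacing the ordinary Gauss-Bonnet theorem by the Chern-Gauss-Bonnet formula for the closed oriented $4$-manifold $\Sigma$. First I would specialize the integral identity \eqref{statiid} of Theorem \ref{main-thmB} to $n=5$. Since $(n-2)(n-1)=12$ and $(n-2)/(n-1)=3/4$, and since the left-hand side $\int_M|\stackrel{\circ}{\rm Ric}_g|^2dM$ is manifestly nonnegative, this yields at once the scalar-curvature estimate
\[
\int_{\Sigma}R_{\gamma}\,d\Sigma\geq\left(12\varepsilon+\frac{3}{4}H^2\right)|\Sigma|.
\]
I would then record that the quantity $12\varepsilon+\tfrac{3}{4}H^2$ is strictly positive in every case: this is clear for $\varepsilon=0,1$ (recall $H>0$ on the boundary of a $V$-static manifold), while for $\varepsilon=-1$ it follows from the standing hypothesis $H>4$, which forces $\tfrac{3}{4}H^2>12$.

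The second ingredient is the Chern-Gauss-Bonnet formula, which for a closed oriented Riemannian $4$-manifold reads
\[
8\pi^2\chi(\Sigma)=\int_{\Sigma}\left(|W_{\gamma}|^2-\frac{1}{2}|\stackrel{\circ}{\rm Ric}_{\gamma}|^2+\frac{1}{24}R_{\gamma}^2\right)d\Sigma,
\]
where $W_{\gamma}$ is the Weyl tensor and $\stackrel{\circ}{\rm Ric}_{\gamma}$ the trace-free Ricci tensor of $\gamma$. Here the Einstein hypothesis on $\Sigma$ enters decisively: it kills the term $-\tfrac{1}{2}|\stackrel{\circ}{\rm Ric}_{\gamma}|^2$, whose negative sign would otherwise obstruct any lower bound for $\chi(\Sigma)$. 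Dropping the nonnegative Weyl term then gives $8\pi^2\chi(\Sigma)\geq\tfrac{1}{24}\int_{\Sigma}R_{\gamma}^2\,d\Sigma$.

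To conclude I would combine the two displays. Applying Cauchy-Schwarz (equivalently, using that a connected Einstein $\gamma$ has constant scalar curvature) gives $\int_{\Sigma}R_{\gamma}^2\,d\Sigma\geq|\Sigma|^{-1}\big(\int_{\Sigma}R_{\gamma}\,d\Sigma\big)^2$, and inserting the scalar-curvature estimate together with the positivity of $12\varepsilon+\tfrac{3}{4}H^2$ produces the desired inequality
\[
8\pi^2\chi(\Sigma)\geq\frac{1}{24}\left(12\varepsilon+\frac{3}{4}H^2\right)^2|\Sigma|;
\]
since the right-hand side is strictly positive, $\chi(\Sigma)>0$ follows immediately. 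For the equality case, equality forces simultaneously $W_{\gamma}\equiv 0$ and equality in \eqref{statineq}; the latter means $\int_M|\stackrel{\circ}{\rm Ric}_g|^2dM=0$, so $g$ is Einstein, and Theorem \ref{MTam} then identifies $(M,g)$ with a geodesic ball in a simply connected space form (whose boundary is a round sphere, so the condition $W_{\gamma}\equiv 0$ is automatically compatible).

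The only genuinely delicate point is the bookkeeping of the sign conventions in the Chern-Gauss-Bonnet formula, together with the observation that it is precisely the Einstein assumption on $\Sigma$—rather than any hypothesis controlling its Weyl curvature—that is needed to discard the indefinite Ricci term; once this is in place, everything else is a direct substitution into the already-established identity of Theorem \ref{main-thmB}.
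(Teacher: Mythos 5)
Your proposal is correct and follows essentially the same route as the paper: specialize the identity of Theorem \ref{main-thmB} to $n=5$, square via H\"older/Cauchy--Schwarz (using the positivity of $12\varepsilon+\tfrac{3}{4}H^2$, which you rightly make explicit as the role of the hypothesis $H>4$ when $\varepsilon=-1$), discard the trace-free Ricci term of Chern--Gauss--Bonnet thanks to the Einstein assumption, and settle equality through $\int_M|\stackrel{\circ}{\rm Ric}_g|^2dM=0$ and Theorem \ref{MTam}. The only discrepancy is the coefficient of the Weyl term in your Chern--Gauss--Bonnet formula ($1$ versus the paper's $\tfrac{1}{4}$, a norm-convention issue), which is immaterial since that term is nonnegative and dropped in either case.
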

\begin{proof}
It follows from Theorem \ref{main-thmB} that
\[
\left(12\varepsilon+\frac{3}{4}H^2\right)|\Sigma|\leq\int_{\Sigma}R_{\gamma}d\Sigma.
\]
Using H\"older inequality  we obtain
\[
\left(12\varepsilon+\frac{3}{4}H^2\right)^2|\Sigma|\leq\int_{\Sigma}R^2_{\gamma}d\Sigma.
\]
Now recall that the Chern-Gauss-Bonnet formula says that 
\[
8\pi^2\chi(\Sigma)= \frac{1}{4}\int_{\Sigma}|W_\gamma|^2d\Sigma
+\frac{1}{24}\int_{\Sigma}R^2_{\gamma}d\Sigma-\frac{1}{2}
\int_{\Sigma}|\stackrel{\circ}{\rm Ric}_{\gamma}|^2d\Sigma,
\] 
where $W$ is the Weyl tensor. Under our assumptions, this leads to  
\[
8\pi^2\chi(\Sigma)\geq \frac{1}{24}\int_{\Sigma}R^2_{\gamma}d\Sigma,
\]
and the result follows.
\end{proof}

Let us now recall a result proved in \cite[Corollary 3]{MT1} which, under suitable assumptions, characterizes certain $V$-static manifolds as the limiting cases of  geometric inequalities involving the geometry of the boundary.

\begin{theorem}\label{MT1rig}
	Let $(M,g)$ be an $n$-dimensional compact $V$-static manifold with a
	connected boundary $\Sigma$ and $R_g=\varepsilon n(n-1)$.
	\begin{itemize}
		\item[(i)] If $\varepsilon=0$, $(\Sigma,\gamma)$ is isometric to a geodesic sphere in $\mathbb{R}^n$
		and $M$ is spin if $n\geq8$, then ${\rm Ric}_g(\nu,\nu)$ is a non-positive constant along $\Sigma$,
		and ${\rm Ric}_g(\nu,\nu)=0$ if and only if $(M,g)$ is isometric to a geodesic
		ball in $\mathbb{R}^n$;
		\item[(ii)] If $n=3$, $\varepsilon=0$ and ${\rm Ric}_g(\nu,\nu)=0$ along $\Sigma$, then
		$(M,g)$ is isometric to a geodesic ball in $\mathbb{R}^3$.
		\item[(iii)]  If $n=3$, $\varepsilon=-1$ and $(\Sigma,\gamma)$ is isometric to a geodesic sphere in
		$\mathbb{H}^3$, then ${\rm Ric}_g(\nu,\nu)$ is a constant satisfying ${\rm Ric}_g(\nu,\nu)\leq -2$ along
		$\Sigma$, and ${\rm Ric}_g(\nu,\nu)=-2$ if and only if $(M,g)$ is isometric to a
		geodesic ball in $\mathbb{H}^3$.
	\end{itemize}
\end{theorem}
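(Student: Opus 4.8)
The plan is to obtain all three items from the integral identity (\ref{ineqvstat}) of Theorem \ref{main-ineq}, the Gauss equation for the umbilical boundary used in the proof of Theorem \ref{main-thmB}, and the Einstein rigidity recorded in Theorem \ref{MTam}. Since $\Sigma$ is connected, (\ref{ineqvstat}) reads
\[
\int_M\lambda|\stackrel{\circ}{\rm Ric}_g|^2\,dM=-\kappa\int_\Sigma\big({\rm Ric}_g(\nu,\nu)-\varepsilon(n-1)\big)\,d\Sigma,
\]
where $\kappa=|\nabla_g\lambda|$ along $\Sigma$ and the left-hand side is manifestly nonnegative because $\lambda\ge 0$.

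First I would check that $\partial\lambda/\partial\nu$ is constant along $\Sigma$. Evaluating the $V$-static equation (\ref{vstat}) on a pair $(X,\nu)$ with $X$ tangent to $\Sigma$, and using $\lambda|_\Sigma=0$ together with $g(X,\nu)=0$, gives $\nabla^2_g\lambda(X,\nu)=0$. Since $\nabla_g\lambda=(\partial\lambda/\partial\nu)\,\nu$ on $\Sigma$, a one-line computation shows $X(\partial\lambda/\partial\nu)=\nabla^2_g\lambda(X,\nu)=0$, so $\kappa$ and hence $H=1/\kappa$ are constant on $\Sigma$. Feeding this into the Gauss identity
\[
2{\rm Ric}_g(\nu,\nu)+R_\gamma=\varepsilon n(n-1)+\tfrac{n-2}{n-1}H^2
\]
from the proof of Theorem \ref{main-thmB}, I conclude that ${\rm Ric}_g(\nu,\nu)$ is constant along $\Sigma$ whenever $R_\gamma$ is; this is exactly the situation in (i) and (iii), where $(\Sigma,\gamma)$ is a round geodesic sphere and therefore has constant scalar curvature. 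This settles the ``constant'' assertions in (i) and (iii).

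For the sign and the rigidity I would substitute this constant value back into the displayed identity. Nonnegativity of the left-hand side together with $\kappa,|\Sigma|>0$ forces ${\rm Ric}_g(\nu,\nu)\le\varepsilon(n-1)$, which is ${\rm Ric}_g(\nu,\nu)\le 0$ in (i) and ${\rm Ric}_g(\nu,\nu)\le-2$ in (iii). In the equality case the left-hand side vanishes; since $\lambda>0$ on ${\rm int}\,M$ this gives $\stackrel{\circ}{\rm Ric}_g\equiv0$, so $(M,g)$ is Einstein and Theorem \ref{MTam} identifies it with a geodesic ball in the corresponding space form (the converse being immediate, since such a ball is Einstein with the stated boundary value). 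Item (ii) is the most direct: with ${\rm Ric}_g(\nu,\nu)\equiv0$ assumed outright, the right-hand side is already zero, so $(M,g)$ is Ricci-flat and Theorem \ref{MTam} again yields the geodesic ball in $\mathbb{R}^3$.

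The only genuinely nontrivial input is the Einstein rigidity of Theorem \ref{MTam}; everything else is the Pohozaev-Schoen identity plus the two boundary computations above. I expect the main obstacle to be precisely this rigidity step: in Miao-Tam's original route it was established through the positive mass theorem, which is what forces the spin hypothesis when $n\ge 8$ in item (i). Invoking Theorem \ref{MTam} as a black box isolates that difficulty, so the argument above reduces the whole statement to the two short boundary identities and a single citation.
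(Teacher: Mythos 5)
Your proof is correct, and the comparison here is slightly unusual: the paper never proves Theorem \ref{MT1rig} itself --- it recalls it from \cite[Corollary 3]{MT1} --- and instead proves the extension, Theorem \ref{main-thmBBB}, by exactly the mechanism you employ. Concretely, your items (i) and (iii) amount to specializing the proof of Theorem \ref{main-thmBBB} (via the Gauss-equation identity (\ref{ultima}), which is your displayed Gauss identity rearranged, together with (\ref{ineqvstat}) and Theorem \ref{MTam}) to geodesic spheres, and your item (ii) is literally Corollary \ref{main-thmAAA} with $n=3$, $\varepsilon=0$. Your preliminary boundary computation is sound: evaluating (\ref{vstat}) on a tangential--normal pair does give $\nabla^2_g\lambda(X,\nu)=0$, and since $\nabla_g\lambda$ is normal along $\Sigma$ while $\nabla_X\nu$ is tangential, this yields $X(\partial\lambda/\partial\nu)=0$; this is the computation underlying the paper's assertion $H=-(\partial\lambda/\partial\nu)^{-1}>0$, which also supplies the strict positivity of $\kappa$ your sign argument needs. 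Relative to the cited source, however, your route is genuinely different: Miao--Tam obtain the sign of ${\rm Ric}_g(\nu,\nu)$ from Shi--Tam-type fill-in arguments resting on the positive mass theorem, which is precisely where the spin hypothesis for $n\geq 8$ enters in (i), whereas the Pohozaev--Schoen identity (\ref{ineqvstat}) yields the sign with no spin assumption, and the rigidity input Theorem \ref{MTam} from \cite{MT2} is PMT-free in the Einstein case. So your argument not only proves the statement but establishes (i) without the spin hypothesis, a mild strengthening --- which is exactly the kind of simplification the paper advertises for this method.
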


We extend this result in the following way; see Remark \ref{ultimar} below. 

\begin{theorem}\label{main-thmBBB}
	Let $(M,g)$ be an $n$-dimensional $V$-static manifold with  a connected boundary and $R_g=\varepsilon n(n-1)$. Assume that $(\Sigma,\gamma)$ is isometric to a totally umbilic hypersurface $(\Sigma_\varepsilon,\gamma_\varepsilon)$ in a simply connected space form of curvature $\varepsilon$. Then there holds 
	\[
	H_\varepsilon\geq H\,
	\]
	where $H$ and $H_\varepsilon$ are the mean curvature of $\Sigma$ and $\Sigma_\varepsilon$, respectively. Equality holds if and only if $(M,g)$ is isometric to a geodesic ball in the corresponding simply connected space form.
\end{theorem}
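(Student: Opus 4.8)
The plan is to apply Theorem \ref{main-thmB} to the connected boundary and feed into it the value of the intrinsic scalar curvature $R_\gamma$ forced by the isometry hypothesis. First I would record the two elementary facts about the model hypersurface $(\Sigma_\varepsilon,\gamma_\varepsilon)$. Being compact and totally umbilic in a simply connected space form, it is a geodesic sphere, so its mean curvature $H_\varepsilon$ is a positive constant, and the same Gauss equation used in the proof of Theorem \ref{main-thmB} (with the ambient space form having $R=\varepsilon n(n-1)$ and $\mathrm{Ric}(\nu,\nu)=\varepsilon(n-1)$) gives its intrinsic scalar curvature as the constant
\[
R_{\gamma_\varepsilon}=\varepsilon(n-1)(n-2)+\frac{n-2}{n-1}H_\varepsilon^2 .
\]
Since $(\Sigma,\gamma)$ is isometric to $(\Sigma_\varepsilon,\gamma_\varepsilon)$, its intrinsic scalar curvature $R_\gamma$ equals this constant. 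I would also note the normalization $\kappa=|\nabla_g\lambda|=1/H$ on $\Sigma$, coming from $H=-(\partial\lambda/\partial\nu)^{-1}$ together with $\nu=-\nabla_g\lambda/|\nabla_g\lambda|$, so that $\kappa$, and hence $H$, is a positive constant along the connected boundary.

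Next I would substitute $R_\gamma=R_{\gamma_\varepsilon}$ into the integral identity (\ref{statiid}) of Theorem \ref{main-thmB} specialized to $l=1$. The terms $\varepsilon(n-1)(n-2)$ cancel and the integrand collapses to the constant $\frac{n-2}{n-1}(H_\varepsilon^2-H^2)$, so that
\[
\int_M|\stackrel{\circ}{\rm Ric}_g|^2\,dM=\frac{\kappa}{2}\,\frac{n-2}{n-1}\left(H_\varepsilon^2-H^2\right)|\Sigma| .
\]
Because the left-hand side is nonnegative and $\kappa,|\Sigma|>0$ with $n\geq 3$, this forces $H_\varepsilon^2\geq H^2$; using $H>0$ and $H_\varepsilon>0$ then yields the desired inequality $H_\varepsilon\geq H$.

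For the equality case I would observe that $H_\varepsilon=H$ is equivalent to $\int_M|\stackrel{\circ}{\rm Ric}_g|^2\,dM=0$, i.e. $g$ is Einstein. Since $(M,g)$ is then a compact Einstein $V$-static manifold with connected boundary, Theorem \ref{MTam} gives that it is isometric to a geodesic ball in a simply connected space form, and matching $R_g=\varepsilon n(n-1)$ identifies this as the space form of curvature $\varepsilon$. Conversely, a geodesic ball in that space form is Einstein with $\stackrel{\circ}{\rm Ric}_g=0$, and its bounding geodesic sphere realizes $H=H_\varepsilon$, which closes the equivalence.

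There is no genuine obstacle here; the work is bookkeeping that reduces the statement to a single substitution. The points requiring care are: confirming the Gauss-equation value of $R_{\gamma_\varepsilon}$ in exactly the sign and trace conventions already fixed in the proof of Theorem \ref{main-thmB}; verifying that $\kappa$ (equivalently $H$) is constant on the connected component so that all factors pull out of the boundary integral; and stating the positivity $H_\varepsilon>0$ explicitly, since the identity only produces the squared inequality $H_\varepsilon^2\geq H^2$ directly.
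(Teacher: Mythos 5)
Your proposal is correct and follows essentially the same route as the paper: the paper applies the Gauss equation directly and integrates via the identity (\ref{ineqvstat}), which is precisely the content of Theorem \ref{main-thmB} that you invoke, so the two arguments coincide after unpacking. The equality case via Einstein-ness and Theorem \ref{MTam} is identical in both.
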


\begin{proof}
	It follows from  Gauss equation that
	\[
	2{\rm Ric}_g(\nu,\nu)+R_{\gamma}= \varepsilon n(n-1)+\frac{n-2}{n-1}H^2
	\]
	and
	\[
	R_{\gamma_\varepsilon}=\varepsilon(n-2)(n-1)+\frac{n-2}{n-1}H_{\varepsilon}^2\,.
	\]
	As $(\Sigma,\gamma)$ is isometric to $(\Sigma_\varepsilon,\gamma_\varepsilon)$, we have $R_{\gamma}=R_{\gamma_\varepsilon}$. Hence,
	\begin{equation}\label{ultima}
	2{\rm Ric}_g(\nu,\nu)-2\varepsilon (n-1)=\frac{n-2}{n-1}(H^2-H_{\varepsilon}^2)\,.
	\end{equation}
	By integrating over $(\Sigma,\gamma)=(\Sigma_\varepsilon,\gamma_\varepsilon)$
	and using (\ref{ineqvstat}), the inequality follows 
	since $H$ and $H_{\varepsilon}$ are positive constants.
	Finally, if 
	 $H=H_{\varepsilon}$ we conclude that  $(M,g)$ is Einstein and the rigidity statement  follows from Theorem \ref{MTam}.
\end{proof}

\begin{remark}
	\label{ultimar}{\rm Notice that (\ref{ultima}) justifies our claim above that Theorem \ref{main-thmBBB} extends Theorem \ref{MT1rig}.}
\end{remark}

\begin{corollary}
	Let $(M^n,g)$ compact $V$-static manifold with a connected boundary $\Sigma$ and  $R_g=n(n-1)$. Then $(\Sigma,\gamma)$ can not be isometric to the standard unit $(n-1)$-sphere.
\end{corollary}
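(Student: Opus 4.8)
The plan is to reach a contradiction by inserting the scalar curvature of the round unit sphere into the integral identity (\ref{statiid}) of Theorem \ref{main-thmB}. Since $R_g=n(n-1)$ we are in the case $\varepsilon=1$, and because the boundary is connected the sum in (\ref{statiid}) collapses to a single term, with $\kappa=|\nabla_g\lambda|$ a positive constant along $\Sigma$ and with mean curvature $H>0$ (recall that for any $V$-static manifold each boundary component is umbilic with strictly positive mean curvature $H=-(\partial\lambda/\partial\nu)^{-1}$). These two positivity facts, together with $n\geq 3$, will be the whole engine of the argument.

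Suppose, for contradiction, that $(\Sigma,\gamma)$ is isometric to the standard unit $(n-1)$-sphere. Then $\gamma$ has constant sectional curvature $1$, so its scalar curvature is the constant $R_\gamma=(n-1)(n-2)$. Feeding $\varepsilon=1$ and this value into the boundary integrand of (\ref{statiid}), the first two terms cancel identically,
\[
R_\gamma-\varepsilon(n-2)(n-1)-\frac{n-2}{n-1}H^2=-\frac{n-2}{n-1}H^2,
\]
so that (\ref{statiid}) reduces to
\[
\int_M|\stackrel{\circ}{\rm Ric}_g|^2\,dM=-\frac{1}{2}\,\kappa\,\frac{n-2}{n-1}\,H^2\,|\Sigma|.
\]
The left-hand side is manifestly non-negative, whereas the right-hand side is strictly negative because $\kappa>0$, $H>0$, $|\Sigma|>0$ and $n\geq 3$. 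This contradiction shows that $(\Sigma,\gamma)$ cannot be the unit $(n-1)$-sphere.

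There is no genuine obstacle here: the only inputs are the value $R_\gamma=(n-1)(n-2)$ of the scalar curvature of the unit sphere and the strict positivity of $H$, both recorded above. I note that the same conclusion can be read off from Theorem \ref{main-thmBBB}: the unique totally umbilic hypersurface of the round sphere $S^n$ whose induced metric is the unit $(n-1)$-sphere is the equator (radius $\pi/2$), which is totally geodesic and hence has $H_\varepsilon=0$; the inequality $H_\varepsilon\geq H$ would then force $H\leq 0$, contradicting $H>0$ once more. Either route makes the corollary an immediate by-product of the preceding identities.
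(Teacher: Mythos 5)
Your proof is correct, and your closing remark is in fact verbatim the paper's proof: the paper handles this corollary in one line by applying Theorem \ref{main-thmBBB}, realizing the unit $(n-1)$-sphere as the totally geodesic equator in $S^n$ (so $H_\varepsilon=0$), whence $H_\varepsilon\geq H$ contradicts $H>0$. Your primary route is a mild but genuine variant: you bypass Theorem \ref{main-thmBBB} and plug $R_\gamma=(n-1)(n-2)$ directly into the identity (\ref{statiid}) of Theorem \ref{main-thmB}, obtaining
\[
\int_M|\stackrel{\circ}{\rm Ric}_g|^2\,dM \;=\; -\frac{1}{2}\,\kappa\,\frac{n-2}{n-1}\,H^2\,|\Sigma| \;<\;0,
\]
a clean sign contradiction. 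The two arguments are equivalent in substance --- both amount to the identity (\ref{ineqvstat}) combined with the Gauss equation --- but yours buys two small things: it never needs to exhibit the unit sphere as an umbilic hypersurface of $S^n$, and it sidesteps the degenerate case $H_\varepsilon=0$ of Theorem \ref{main-thmBBB}, whose written proof asserts that ``$H$ and $H_\varepsilon$ are positive constants'' (the argument still goes through there, since integrating (\ref{ultima}) gives $H^2\leq H_\varepsilon^2=0$, i.e.\ the paper's ``contradiction $H=0$'', but your route avoids the issue entirely). All the positivity inputs you invoke ($H>0$ constant on each boundary component, $\kappa=1/H>0$, $n\geq 3$) are exactly as recorded in Section \ref{vstatic}, so there is no gap.
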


\begin{proof}
	If the isometry exists then the theorem yields the contradiction $H=0$.
\end{proof}

\begin{remark}
	\label{remspin}{\rm If $\varepsilon=0,-1$ and $M$ is spin then  Theorem \ref{main-thmBBB} can be improved in the sense that it suffices to assume that $\gamma\geq\gamma_{\varepsilon}$; see Section \ref{vstatspin}.}
\end{remark}

\section{Applications of the generalized Pohozahev-Schoen identity to  generalized solitons and an Alexandrov-type theorem}
\label{applpsvstac}

So far we have presented applications of the Pohozaev-Schoen identity (\ref{schoenid2}). We now discuss a few applications of the  generalized Pohoz{a}ev-Schoen identity (\ref{schoenpohgen}). In particular, we present an alternative approach to results appearing in \cite{BBR} and \cite{GWX}. Even though those results are not directly related to static-type metrics, which is our main focus here, we include them in order to illustrate the flexibility of the method. Even in the ``classical'' case where $B=E_g$, the Einstein tensor, the  use of (\ref{schoenid2}) substantially simplifies the proofs of results otherwise obtained by involved computations. We also include here an Alexandrov-type theorem for hypersurfaces suggested by Example \ref{hyper}.  

We start with a definition which encompasses concepts introduced in \cite{PRRS} and \cite{GWX}.

\begin{definition}\label{gensol}
	An $h${\rm -generalized almost  soliton} is a complete Riemannian manifold $(M^{n},g)$ endowed with a locally conserved symmetric tensor $B$, a vector field $X$ and  two smooth functions $\mu$ and $h$
	satisfying 
	\[
	B+\frac{h}{2}\mathcal{L}_{X}g=\mu g.
	\]
	In case $h=1$ we say that $(M,g,B, X,\mu)$ is a {\rm generalized almost soliton}. If $X$ is a gradient field then we say that the corresponding soliton is a {\em gradient}. Moreover, if $X$ is a Killing field then we say that the soliton is trivial. 
\end{definition}

\begin{theorem}\label{gensolbbr}
	Let $(M,g,B,X,\mu)$ be a generalized almost soliton with $M$ closed.
	\begin{itemize}
		\item[(i)] If $b=g^{ij}B_{ij}$ is constant then $X$ is a conformal vector field. Moreover, if $X$ is a  gradient field then  $(M,g)$ is conformally equivalent to a round sphere.
		\item[(ii)] If $B=E_g$  and the almost Ricci soliton $(M,g)$ is non-trivial and has constant scalar curvature then it  is  a gradient and $(M,g)$ is isometric to a standard sphere. In fact, $X$ decomposes as a sum of a Killing and a gradient field.   
	\end{itemize}
\end{theorem}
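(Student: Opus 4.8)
The plan is to treat both items by first extracting a pointwise consequence from the closed-manifold version (\ref{schpohocol1}) of the generalized Pohozaev-Schoen formula, and then feeding that information into classical rigidity theorems for conformal vector fields.

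For item (i), I would begin by taking the trace of the defining equation $B+\frac12\mathcal L_Xg=\mu g$ from Definition \ref{gensol}, which gives $b+{\rm div}_gX=n\mu$ and, more usefully, $\mathcal L_Xg=2(\mu g-B)$. Splitting into trace and trace-free parts shows $\stackrel{\circ}{\mathcal L_Xg}=-2\stackrel{\circ}{B}$, so that $X$ is conformal precisely when $\stackrel{\circ}{B}=0$. To establish the latter I would invoke (\ref{schpohocol1}): since $M$ is closed and $b$ is constant, the left-hand side $\int_M X(b)\,dM$ vanishes, while on the right the pairing of a trace-free tensor against $\mathcal L_Xg$ only sees the trace-free part, giving $\langle\stackrel{\circ}{B},\mathcal L_Xg\rangle=-2|\stackrel{\circ}{B}|^2$. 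Hence $\int_M|\stackrel{\circ}{B}|^2\,dM=0$, forcing $\stackrel{\circ}{B}\equiv0$ and $X$ conformal. If moreover $X=\nabla_g\varphi$ is a nontrivial gradient field, then the conformal condition reads $\nabla^2_g\varphi=\frac{\Delta_g\varphi}{n}g$, and I would close the argument by citing the classical theorem of Tashiro--Obata--K\"uhnel that a closed manifold carrying a nonconstant solution of this concircular equation is conformally diffeomorphic to the round sphere.

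Item (ii) I would reduce to (i). Writing $B=E_g$, Example \ref{einsteinten} gives $\stackrel{\circ}{B}=\stackrel{\circ}{\rm Ric}_g$ and $b=g^{ij}(E_g)_{ij}=-\frac{n-2}{2}R_g$; constancy of $R_g$ thus makes $b$ constant, so item (i) applies and yields at once that $X$ is conformal and that $\stackrel{\circ}{\rm Ric}_g=0$, i.e. $(M,g)$ is Einstein. Since the soliton is assumed nontrivial, $X$ is a conformal but non-Killing field on a closed Einstein manifold, whence Obata's theorem (equivalently the Yano--Nagano rigidity result) forces $(M,g)$ to be isometric to a standard sphere. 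Finally, to see that the soliton is a gradient, I would use that on the round sphere every conformal field splits as $X=K+\nabla_g\varphi$ with $K$ Killing; because $\mathcal L_Kg=0$, the potential field $\nabla_g\varphi$ satisfies the very same equation $E_g+\frac12\mathcal L_{\nabla_g\varphi}g=\mu g$, so the soliton is realized as a gradient one.

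The routine parts are the trace bookkeeping and the simplification of the integrand, which present no real difficulty. The main obstacle is the correct invocation of the two external rigidity inputs: the concircular-function classification in (i) and Obata's Einstein rigidity in (ii). In particular, I would need to make sure the nontriviality hypothesis genuinely produces a nonconstant potential (so the classification theorems apply rather than degenerating to $X\equiv0$), and to justify the conformal-field decomposition on the sphere used in the last step; these facts are standard but must be stated with hypotheses matching our setting, including the dimension range in which Obata's theorem is available.
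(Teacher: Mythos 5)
Your proposal is correct and follows essentially the same route as the paper: the closed-manifold identity (\ref{schpohocol1}) with constant trace forces $\int_M|\stackrel{\circ}{B}|^2\,dM=0$, hence $\stackrel{\circ}{B}=0$ and $X$ conformal, after which classical conformal rigidity (Yano for the gradient case in (i), Obata/Nagano--Yano for (ii)) finishes the argument. The only divergence is in the endgame of (ii): the paper rules out a constant conformal factor via the concircular identity $\nabla^2_g\varphi=-\frac{R_g}{n(n-1)}\varphi g$ (using Obata when $R_g=0$) and constructs the gradient part explicitly as $\nabla_g u$ with $u=-\frac{n(n-1)}{R_g}\varphi$, whereas you invoke the standard decomposition of conformal fields on the round sphere and the fact that on a closed manifold a homothetic field is Killing --- both routes are sound, though, as you note yourself, the nontriviality hypothesis must be used precisely to guarantee a nonconstant potential before the rigidity theorems apply.
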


\begin{proof}
	By definition we have that $\mathcal{L}_{X}g=-2\stackrel{\circ}{B}+fg$ for some $f$. Then using the generalized Pohoz{a}ev-Schoen identity in the form (\ref{schpohocol1}) we obtain that 
	$\int_{M}|\stackrel{\circ}{B}|^{2}dM=0$ 
	and hence $\stackrel{\circ}{B}\,=0$.
	It follows  that $\mathcal{L}_{X}g=fg$ and therefore $X$ is a conformal vector field. By setting $f=2\varphi$ and taking trace we get $\varphi={\rm div}_gX/n$. Thus, if $X$ is a gradient field, $X=\nabla_g\psi$, then $\psi$ satisfies 
	\[
	\nabla^2_g\psi=\varphi g=\frac{\Delta_g\psi}{n}g.
	\]
	That $(M,g)$ is conformally equivalent to a round sphere now follows from \cite[Theorem 6.3]{Y}.
	
	Now, if $B={E}_g$ then  the argument above based on (\ref{schpohocol1}) shows that $(M,g)$ is Einstein and $X$ is a conformal vector field, $\mathcal L_Xg=2\varphi g$. With this information at hand, the assertions in (ii) follow easily from well-known results. For instance, we may proceed as follows.
	First observe that, under these conditions, it is known that the conformal potential $\varphi$ satisfies the 
	identity
	\begin{equation}\label{yanoeq}
	\nabla^2_g\varphi=-\frac{R_g}{n(n-1)}\varphi g
	\end{equation}
	see the proof of \cite[Lemma 2.2]{H1}.
	Now assume that $\varphi$ is a constant function. From (\ref{yanoeq}) we see that either $R_g=0$ or $\varphi=0$. In the first case, it follows from 
	\cite[Theorem 6]{O} that $X$ is
	Killing, which contradicts the non-triviality of the soliton. Also, if $\varphi=0$ we have again that $X$ is Killing. Thus, $\varphi$ is a non-constant function so that $X$ is a non-homothetic conformal field and that $(M,g)$ is isometric to a round sphere now follows from \cite{NY}.   
	
	Finally, take trace in (\ref{yanoeq}) to get
	\[
	\Delta_g\varphi=-\frac{R_g}{n-1}\varphi,
	\] 
	that is, $R_g/(n-1)$ is a non-trivial eigenvalue of $\Delta_g$. In particular, $R_g>0$. 
	Now set
	\[
	u=-\frac{n(n-1)}{R_g}\varphi
	\]
	and check directly that 
	$\mathcal L_Xg=\mathcal L_{\nabla_g u}g$.
	This shows that $X-\nabla_gu$ is Killing.
\end{proof}

Item (ii) above corresponds to \cite[Corollary 1]{BBR}. By using  the same argument we can approach \cite[Theorem 1]{GWX}; see item $({\rm ii})$ in the next theorem.

\begin{theorem}\label{gensolgwx}
	Let $(M,g,B,X,\lambda,h)$ be a generalized $h$-almost soliton with $M$ closed. 
	\begin{itemize}
		\item[(i)] If  $v=g^{ij}B_{ij}$ is constant and $h$ has a fixed sign, i.e., $h>0$ or $h<0$, then $X$ is a conformal vector field. In particular, if $X$ is gradient field then  $(M,g)$ is conformally equivalent to a round sphere. 
		\item[(ii)] If $B=E_g$, and the generalized almost Ricci soliton $(M,g)$ is non-trivial, has constant scalar curvature and $h$ has a fixed sign then it is a gradient and and $(M,g)$ is isometric to a standard sphere. In fact, $X$ decomposes as a sum of a Killing and a gradient field.  
	\end{itemize}
\end{theorem}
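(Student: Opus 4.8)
The plan is to mimic the proof of Theorem \ref{gensolbbr}, carrying the function $h$ through the computation and using the fixed-sign hypothesis in place of the constancy that was available in the $h=1$ case. For part (i), I would first take the trace-free part of the defining relation $B+\frac{h}{2}\mathcal{L}_X g=\lambda g$. Since the right-hand side is pure trace, this gives $\stackrel{\circ}{B}+\frac{h}{2}\stackrel{\circ}{\mathcal{L}_X g}=0$, and because the fixed-sign assumption guarantees $h\neq 0$ everywhere, I can solve for $\stackrel{\circ}{\mathcal{L}_X g}=-\frac{2}{h}\stackrel{\circ}{B}$.

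Next I would apply the generalized Pohozaev-Schoen identity in the closed form (\ref{schpohocol1}) with this $B$ and $X$. Since $v=g^{ij}B_{ij}$ is constant, we have $X(v)\equiv 0$, so the left-hand side vanishes and $\int_M\langle\stackrel{\circ}{B},\mathcal{L}_X g\rangle\,dM=0$. Substituting the relation from the previous step, and using that $\langle\stackrel{\circ}{B},\mathcal{L}_X g\rangle=\langle\stackrel{\circ}{B},\stackrel{\circ}{\mathcal{L}_X g}\rangle$ because $\stackrel{\circ}{B}$ is trace-free, this collapses to $\int_M\frac{1}{h}|\stackrel{\circ}{B}|^2\,dM=0$. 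This is the one place where the hypothesis on $h$ is indispensable: since $1/h$ has a fixed sign, the integrand has a fixed sign, and the integral can vanish only if $\stackrel{\circ}{B}\equiv 0$. Then $\stackrel{\circ}{\mathcal{L}_X g}=0$, so $\mathcal{L}_X g=fg$ for some function $f$, i.e. $X$ is conformal. If in addition $X=\nabla_g\psi$ is a gradient field, taking traces gives $\nabla^2_g\psi=\frac{\Delta_g\psi}{n}g$, and the Obata-type theorem \cite{Y} yields that $(M,g)$ is conformally a round sphere, exactly as in Theorem \ref{gensolbbr}(i).

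For part (ii) I would take $B=E_g$, so that $\stackrel{\circ}{B}=\stackrel{\circ}{\rm Ric}_g$ by Example \ref{einsteinten} and $v=\frac{2-n}{2}R_g$. The hypothesis of constant scalar curvature is precisely what makes $v$ constant, so the argument of part (i) applies verbatim to give that $(M,g)$ is Einstein and that $X$ is conformal, $\mathcal{L}_X g=2\varphi g$. From here the proof is a transcription of Theorem \ref{gensolbbr}(ii): the conformal potential satisfies the Yano-type identity (\ref{yanoeq}); if $\varphi$ were constant then (\ref{yanoeq}) forces $R_g=0$ or $\varphi=0$, and in either case \cite{O} (or directly) makes $X$ Killing, contradicting non-triviality; hence $\varphi$ is a non-constant conformal potential and \cite{NY} identifies $(M,g)$ with a round sphere; finally, setting $u=-\frac{n(n-1)}{R_g}\varphi$ and checking $\mathcal{L}_X g=\mathcal{L}_{\nabla_g u}g$ shows that $X-\nabla_g u$ is Killing, giving the stated decomposition.

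The main obstacle, which is ultimately mild, is to identify where the fixed-sign condition on $h$ is actually used. Unlike the $h=1$ situation, the weight $1/h$ now sits inside the integral $\int_M\frac{1}{h}|\stackrel{\circ}{B}|^2\,dM$, and the conclusion $\stackrel{\circ}{B}=0$ is available only because this weight does not change sign; dropping the sign hypothesis would break exactly this step. Everything else, including all of part (ii), reduces to the $h=1$ argument once the Einstein and conformal conclusions of part (i) are in hand, so no genuinely new ideas are required beyond this observation.
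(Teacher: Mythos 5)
Your proposal is correct and follows essentially the same route as the paper, which in fact gives no separate proof of Theorem~\ref{gensolgwx} and merely asserts it follows ``by the same argument'' as Theorem~\ref{gensolbbr}; your writeup is exactly that argument with the weight carried through, correctly isolating $\int_M \frac{1}{h}\,|\!\stackrel{\circ}{B}\!|^2\,dM=0$ together with the fixed sign of $1/h$ as the one step where the hypothesis on $h$ replaces the $h=1$ case.
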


Based on Example \ref{hyper} we now present an  Alexandrov-type theorem for  hypersurface 
$(M^n,g)$
immersed in an Einstein manifold $(\widetilde M^{n+1},\widetilde g)$ satisfying an identity of the type
\begin{equation}\label{alex}
II+\nabla_g^{2}f=\mu g,
\end{equation}
where $\mu$ and $f$ are smooth functions on $M$ and $II$ is the second fundamental form.

\begin{theorem}\label{alexth}
	Let $(M,g)$ be as above and assume that
	the mean curvature $H=g^{ij}II_{ij}$ is constant. Then $M$ is a totally umbilical hypersurface. In particular, if $\widetilde M$ is a simply connected space form then $M$ is a geodesic sphere.
\end{theorem}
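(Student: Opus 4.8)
The plan is to apply the closed-manifold form \eqref{schpohocol1} of the generalized Pohozaev-Schoen identity (Theorem \ref{gpi}) to the locally conserved tensor $B=A=II-Hg$ singled out in Example \ref{hyper}, with the distinguished field $X=\nabla_g f$ supplied by the structural equation \eqref{alex}. As is standard for Alexandrov-type statements, I assume throughout that $M$ is compact without boundary. The entire argument then collapses the conclusion to the vanishing of a single nonnegative integral.

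First I would verify that $A=II-Hg$ is indeed locally conserved when $\widetilde M$ is Einstein, as claimed in Example \ref{hyper}. Contracting the Codazzi equation
\[
(\nabla_{e_i}II)(e_j,e_k)-(\nabla_{e_j}II)(e_i,e_k)=\widetilde R(e_i,e_j,e_k,\nu)
\]
over the pair $(i,k)$ produces $\nabla^{i}II_{ij}-\nabla_j H=-\widetilde{\rm Ric}(e_j,\nu)$. Since $\widetilde M$ is Einstein, $\widetilde{\rm Ric}$ is a multiple of $\widetilde g$, whence $\widetilde{\rm Ric}(e_j,\nu)=0$ because $e_j$ is tangent while $\nu$ is normal. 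Thus $\nabla^{i}II_{ij}=\nabla_j H$, so $\nabla^{i}A_{ij}=0$ and $A$ fits Definition \ref{loccons}. This Codazzi-plus-Einstein computation is the only genuinely geometric input, and I regard it as the main (if routine) obstacle; the rest is algebra.

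Next I would record the relevant traces. The trace of $B=A$ is $b=g^{ij}(II_{ij}-Hg_{ij})=(1-n)H$, which is constant by hypothesis, so $X(b)=0$ for every $X$. The trace-free part is $\stackrel{\circ}{B}=II-\tfrac{H}{n}g=\stackrel{\circ}{II}$. On the other hand, taking the trace-free part of \eqref{alex} gives $\stackrel{\circ}{II}=-\stackrel{\circ}{\nabla^2_g f}$, since $\mu g$ is pure trace. Choosing $X=\nabla_g f$, so that $\mathcal L_X g=2\nabla^2_g f$, I compute
\[
\langle \stackrel{\circ}{B},\mathcal L_X g\rangle=2\langle \stackrel{\circ}{II},\nabla^2_g f\rangle=2\langle \stackrel{\circ}{II},\stackrel{\circ}{\nabla^2_g f}\rangle=-2|\stackrel{\circ}{II}|^2,
\]
where the middle equality uses that $\stackrel{\circ}{II}$ is trace-free.

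Finally, feeding these into \eqref{schpohocol1} yields
\[
0=\int_M X(b)\,dM=\frac{n}{2}\int_M \langle \stackrel{\circ}{B},\mathcal L_X g\rangle\,dM=-n\int_M |\stackrel{\circ}{II}|^2\,dM,
\]
forcing $\stackrel{\circ}{II}\equiv 0$; that is, $M$ is totally umbilical. For the last assertion, a compact totally umbilical hypersurface of a simply connected space form is a geodesic sphere by the classical classification of umbilical hypersurfaces, which completes the proof.
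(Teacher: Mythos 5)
Your proposal is correct and follows essentially the same route as the paper: you establish local conservation of $B=II-Hg$ via the contracted Codazzi equation and the Einstein condition on $\widetilde M$, then apply the closed-manifold identity \eqref{schpohocol1} with $X=\nabla_g f$ and \eqref{alex} to force $\int_M|\stackrel{\circ}{II}|^2\,dM=0$. The only cosmetic difference is that you invoke the constancy of $H$ at the start (so $X(b)=0$), whereas the paper first derives the identity $\int_{M}\langle\nabla_g H,\nabla_g f\rangle\,dM=\tfrac{n}{n-1}\int_{M}|\stackrel{\circ}{II}|^{2}\,dM$ and only then uses $H$ constant.
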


\begin{proof}
	We first construct a locally conserved tensor on $M$. Let $\left\{e_{i}\right\}$, $1\leq i\leq n$ be a local orthonormal frame on $M$,  which we extend to $\widetilde M$ by setting $e_{n+1}=\nu$. Let $h_{ij}=II(e_{i},e_{j})$, $1\leq i,j\leq n$. It follows from Codazzi equations that
	$$\nabla^{k}h_{ij}-\nabla^{j}h_{ik}=\widetilde{R}_{n+1ijk},\quad 1\leq i,j,k\leq n,$$
	where $\widetilde{R}_{ijkl}$ is the curvature  tensor of $(\widetilde M,\widetilde{g})$. If we take $j=i$ in this identity and sum over $i$ we get
	$$
	\nabla^{k}H=\displaystyle\sum_{i=1}^{n}h_{ii,k}=\displaystyle\sum_{i=1}^{n}\nabla^{i}h_{ik}+\displaystyle\sum_{i=1}^{n}\widetilde{R}_{n+1iik}=\displaystyle\sum_{i=1}^{n}\nabla^{i}h_{ik}+\widetilde{\rm Ric}_{n+1k}.
	$$
	Since $\widetilde M$ is Einstein, $\widetilde{\rm Ric}_{n+1k}=\frac{\widetilde R}{n+1}\widetilde{g}_{n+1k}=0$ for each $k=1,\ldots, n$, and hence $\nabla^{i}II_{ij}=\nabla^{j}H$. Therefore, the tensor $B=II-Hg$ is  locally conserved, ${\rm tr}_gB=(1-n)H$ and ${\stackrel{\circ}{B}}={\stackrel{\circ}{II}}$. Thus, using (\ref{schpohocol1}) and (\ref{alex}) we obtain
	$$\int_{M}\langle \nabla_g H, \nabla_g f\rangle\,dM=\frac{n}{n-1}\int_{M}|{\stackrel{\circ}{II}}|^{2}dM,$$
	and the result follows.
\end{proof}

\section{$V$-static manifolds and spinors}\label{vstatspin}

In this section we show how Theorem \ref{main-thmBBB} can be improved in case $\varepsilon=0,-1$ if we assume that the $V$-static manifold $(M,g)$ is spin. Here we follow closely the ideas in \cite{HMZ} \cite{HMR} \cite{HMRa}.   
We retain the notation of Section \ref{vstatic}.

We set $\epsilon=\sqrt{\varepsilon}$ so that $\epsilon=0,i$ if $\varepsilon=0,-1$, respectively. 
We consider a compact spin manifold $M$ of dimension $n\geq 2$ endowed with a Riemannian metric $g$. Also, we assume that $M$ carries a non-empty boundary $\Sigma$. We fix a spin structure on $M$ and denote by $\mathbb SM$ the corresponding spin bundle. If $\nabla^g$ is the spin connection on $\mathbb SM$ induced by  $g$ we set
\[
\widetilde \nabla_X^g\psi=\nabla^g_X\psi+\frac{\epsilon}{2}X\cdot \psi,
\] 
where $X\in\Gamma(TM)$, $\psi\in\Gamma(\mathbb SM)$ and the dot means Clifford multiplication. The corresponding Dirac operator is 
\[
\widetilde D^g\psi=\sum_{i=1}^ne_i\cdot \widetilde \nabla^g_{e_i}\psi=D^g\psi-\frac{n\epsilon}{2}\psi,
\] 
where $D^g\psi=\sum_{i=1}^ne_i\cdot \nabla^g_{e_i}\psi$ is the standard Dirac operator. The integral Lichnerowicz formula reads
\begin{equation}\label{lichform}
\int_M\left(|\widetilde \nabla^g\psi|^2-|\widetilde D^g\psi|^2+\frac{R_g-\varepsilon n(n-1)}{4}|\psi|^2\right)dM=\int_{\Sigma}
\left\langle {\widetilde D}^\gamma\psi-\frac{H}{2}\psi,\psi\right\rangle d\Sigma,
\end{equation}
where  
\[
\widetilde D^\gamma \psi=D^\gamma\psi +\frac{n-1}{2}\epsilon\psi, 
\]
where $D^\gamma$ is the Dirac operator on $\mathbb S\Sigma$, the induced spin bundle on $(\Sigma,\gamma)$,  $\gamma=g|_\Sigma$. 

Starting from (\ref{lichform}), the following result is proved in \cite{HMZ} \cite{HMR}.

\begin{proposition}\label{propspin} \cite{HMZ} \cite{HMR}
	Let $(M,g)$ as above and assume that 
	$R\geq \varepsilon n(n-1)|\varepsilon|^2$ and $H\geq 0$.
	Then there holds
	\[
	\widetilde\lambda^\gamma\geq \frac{\underline H}{2},
	\]
	where  $\widetilde{\lambda}^\gamma$ be the lowest positive eigenvalue of $\widetilde D^\gamma$.
	Moreover, the equality implies that $(M,g)$ is Einstein (with $R_g=\varepsilon n(n-1)$) and $\Sigma$ has constant mean curvature.
\end{proposition}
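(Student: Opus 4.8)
The plan is to adapt the holographic/spinorial comparison method of Hijazi--Montiel--Zhang and Hijazi--Montiel--Raulot, using the integral Lichnerowicz formula (\ref{lichform}) as the main engine. Throughout, $\|\cdot\|$ denotes the $L^2$-norm on $(\Sigma,\gamma)$. First I would choose an eigenspinor $\phi\in\Gamma(\mathbb S\Sigma)$ of the boundary operator $\widetilde D^\gamma$ for its lowest positive eigenvalue, so $\widetilde D^\gamma\phi=\widetilde\lambda^\gamma\phi$. The crucial step is to extend $\phi$ into the interior: I would solve the boundary value problem $\widetilde D^g\psi=0$ on $M$ subject to the generalized Atiyah--Patodi--Singer condition $P_+(\psi|_\Sigma)=\phi$, where $P_+$ is the spectral projection onto the span of the eigenspinors of $\widetilde D^\gamma$ with strictly positive eigenvalue. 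Since $\widetilde\lambda^\gamma$ is the smallest positive eigenvalue, $\phi$ lies in the range of $P_+$, so the condition is admissible, and we may write $\psi|_\Sigma=\phi+\eta$ with $\eta$ in the (closed) span of the eigenspinors with non-positive eigenvalue.

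Feeding $\psi$ into (\ref{lichform}) and using $\widetilde D^g\psi=0$ together with the hypothesis on $R_g$ (which renders the zeroth-order term $\tfrac14(R_g-\varepsilon n(n-1))|\psi|^2$ non-negative), the left-hand side is $\geq 0$, whence
\[
\int_\Sigma\Big\langle\widetilde D^\gamma\psi-\tfrac{H}{2}\psi,\psi\Big\rangle\,d\Sigma\geq 0.
\]
Self-adjointness of $\widetilde D^\gamma$ and orthogonality of its spectral subspaces make the cross terms vanish, so that $\int_\Sigma\langle\widetilde D^\gamma\psi,\psi\rangle\,d\Sigma=\widetilde\lambda^\gamma\|\phi\|^2+\langle\widetilde D^\gamma\eta,\eta\rangle\leq\widetilde\lambda^\gamma\|\phi\|^2$, since $\eta$ sits in the non-positive part of the spectrum. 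On the other hand $H\geq\underline H$ gives $\tfrac12\int_\Sigma H|\psi|^2\,d\Sigma\geq\tfrac{\underline H}{2}\|\psi\|^2\geq\tfrac{\underline H}{2}\|\phi\|^2$. Combining these with the displayed inequality yields $\widetilde\lambda^\gamma\|\phi\|^2\geq\tfrac{\underline H}{2}\|\phi\|^2$, that is, $\widetilde\lambda^\gamma\geq\underline H/2$.

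For the rigidity statement, equality forces each inequality above to be sharp. Vanishing of the bulk integral gives $\widetilde\nabla^g\psi\equiv 0$ and $R_g=\varepsilon n(n-1)$ wherever $\psi\neq 0$; since a nonzero $\widetilde\nabla^g$-parallel spinor is nowhere vanishing, $R_g=\varepsilon n(n-1)$ on all of $M$, and the integrability condition satisfied by such (real or imaginary, according to $\varepsilon$) Killing spinors forces $(M,g)$ to be Einstein. Moreover equality in the last two steps forces $\eta=0$, so $\psi|_\Sigma=\phi$ is nowhere zero, and then $\int_\Sigma(H-\underline H)|\psi|^2\,d\Sigma=0$ with $H\geq\underline H$ forces $H\equiv\underline H$; thus $\Sigma$ has constant mean curvature.

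I expect the main obstacle to be the solvability of the extension problem. One must verify that the projection $P_+$ defines an elliptic, Fredholm boundary condition for $\widetilde D^g$ and that the boundary data $\phi$ is attained; here (\ref{lichform}) is used a second time, now with $H\geq 0$, to control the relevant kernel (if $P_+(\psi|_\Sigma)=0$ and $\widetilde D^g\psi=0$ then the boundary term is both $\leq 0$ and $\geq 0$, pinning down $\widetilde\nabla^g\psi\equiv 0$). This is exactly the analytic machinery developed in \cite{HMZ}, \cite{HMR}, \cite{HMRa}, which I would import rather than reprove. A secondary delicate point, most visible when $\varepsilon=-1$, is the correct self-adjoint interpretation of $\widetilde D^\gamma$ and of the Hermitian pairings entering (\ref{lichform}); this too is handled in the cited references.
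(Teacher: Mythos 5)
Your argument is exactly the one the paper relies on: the paper offers no proof of Proposition \ref{propspin}, stating only that it follows from (\ref{lichform}) as proved in \cite{HMZ} and \cite{HMR}, and the proof in those references is precisely your scheme --- extend an eigenspinor of $\widetilde D^\gamma$ for the lowest positive eigenvalue to a $\widetilde D^g$-harmonic spinor via the APS-type spectral boundary condition, feed it into the integrated Lichnerowicz formula, kill the cross terms by spectral orthogonality, and read off rigidity from the vanishing of $\widetilde\nabla^g\psi$. Your chain of inequalities, the solvability discussion (including the second use of (\ref{lichform}) with $H\geq 0$ to control the kernel), and the caveat about the self-adjoint realization of $\widetilde D^\gamma$ when $\varepsilon=-1$ all match the cited treatment, so the proposal is correct and essentially identical in approach.
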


\begin{corollary}\label{corspin}
	If $\lambda^\gamma$ is the lowest positive eigenvalue of 
	$ D^\gamma$ then
	\begin{equation}\label{spect1}
	{\lambda}^\gamma\geq \frac{1}{2}\sqrt{\underline H^2+\varepsilon (n-1)^2}.
	\end{equation}
	Moreover, the equality implies that $(M,g)$ is Einstein (with  $R_g=\varepsilon n(n-1)$) and $\Sigma$ has constant mean curvature.
\end{corollary}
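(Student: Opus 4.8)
The plan is to deduce Corollary~\ref{corspin} from Proposition~\ref{propspin} by a purely spectral comparison of the two boundary Dirac operators. Since $\widetilde D^\gamma=D^\gamma+\frac{n-1}{2}\epsilon$ differs from $D^\gamma$ only by the scalar $\frac{n-1}{2}\epsilon$ (with $\epsilon=\sqrt\varepsilon\in\{0,i\}$), the two operators share the same eigenspinors; the whole point is to convert the bound $\widetilde\lambda^\gamma\geq\underline H/2$ supplied by the proposition into a bound on $\lambda^\gamma$.

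First I would expand the norm of $\widetilde D^\gamma\psi$ on an eigenspinor. For $\psi$ with $D^\gamma\psi=\mu\psi$ one has
\[
|\widetilde D^\gamma\psi|^2=|D^\gamma\psi|^2+\frac{(n-1)^2|\epsilon|^2}{4}|\psi|^2+(n-1)\,{\rm Re}\,\langle D^\gamma\psi,\epsilon\psi\rangle.
\]
The key observation is that the cross term vanishes: $\langle D^\gamma\psi,\epsilon\psi\rangle=\bar\epsilon\,\mu\,|\psi|^2$ is $\mu|\psi|^2$ times $\bar\epsilon$, and ${\rm Re}\,\bar\epsilon=0$ because $\epsilon$ is either $0$ or purely imaginary. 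Using $|\epsilon|^2=-\varepsilon$, this gives the clean relation $(\widetilde\lambda^\gamma)^2=(\lambda^\gamma)^2-\varepsilon\frac{(n-1)^2}{4}$ when $\mu$ is taken to be the lowest positive eigenvalue of $D^\gamma$.

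The inequality then follows at once. By Proposition~\ref{propspin} we have $(\widetilde\lambda^\gamma)^2\geq\underline H^2/4$, so
\[
(\lambda^\gamma)^2=(\widetilde\lambda^\gamma)^2+\varepsilon\frac{(n-1)^2}{4}\geq\frac14\left(\underline H^2+\varepsilon(n-1)^2\right),
\]
and taking square roots yields (\ref{spect1}); the statement is of course only substantive when the right-hand side is real, i.e.\ when $\underline H\geq n-1$ in the hyperbolic case $\varepsilon=-1$. For the rigidity statement I would run the chain backwards: equality in (\ref{spect1}) forces $\widetilde\lambda^\gamma=\underline H/2$, whereupon the equality clause of Proposition~\ref{propspin} gives that $(M,g)$ is Einstein with $R_g=\varepsilon n(n-1)$ and that $\Sigma$ has constant mean curvature.

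The one step that genuinely needs care, and which I expect to be the main obstacle, is the identification $\widetilde\lambda^\gamma=\sqrt{(\lambda^\gamma)^2-\varepsilon(n-1)^2/4}$ in the case $\varepsilon=-1$: there $\widetilde D^\gamma$ is not self-adjoint, its eigenvalues $\mu+\frac{n-1}{2}\epsilon$ are complex, and its ``lowest positive eigenvalue'' has to be read as the smallest modulus of these. Since $\mu\mapsto|\mu+\frac{n-1}{2}\epsilon|=\sqrt{\mu^2-\varepsilon(n-1)^2/4}$ is increasing in $|\mu|$, this smallest modulus is attained at the eigenvalue of $D^\gamma$ of least absolute value, and I would need the symmetry of the Dirac spectrum of the closed boundary $(\Sigma,\gamma)$ to ensure that this coincides with the lowest positive eigenvalue $\lambda^\gamma$; verifying this matching is the crux of the argument.
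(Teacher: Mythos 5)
Your argument is correct and is essentially the paper's own proof: the paper simply asserts ``it is easy to check'' that $\widetilde{\lambda}^\gamma=\sqrt{(\lambda^\gamma)^2-\varepsilon\frac{(n-1)^2}{4}}$ and combines it with Proposition~\ref{propspin}, which is exactly the spectral comparison you carry out, cross term and equality case included. Your closing worry is also handled correctly: in this boundary setting Clifford multiplication by $\nu$ anticommutes with $D^\gamma$ on $\mathbb{S}M|_\Sigma$, so the spectrum is indeed symmetric (and since the modulus $\sqrt{\mu^2-\varepsilon(n-1)^2/4}$ depends only on $|\mu|$, the smallest modulus is attained at $\lambda^\gamma$), so the step you flag as the crux goes through.
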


\begin{proof}
	It is easy to check that  
	\[
	\widetilde{\lambda}^\gamma=\sqrt{(\lambda^\gamma)^2-\varepsilon\frac{(n-1)^2}{4}},
	\]
	which proves the result.
\end{proof}

Following \cite{HMRa}, we combine this with a Vafa-Witten-type bound due to Herzlich \cite{H2}.
Define $s_\varepsilon(r)=r$ if $\varepsilon=0$ and $s_\varepsilon(r)=\sinh r$ if $\varepsilon=-1$. Also, define $c_\varepsilon(r)=s_\varepsilon'(r)$. Thus, if $h_0$ is the round metric on the unit sphere $\mathbb S^{n-1}$ then $g_\varepsilon=dr^2+s^2_{\varepsilon}(r)h_0$ is the standard metric on the simply connected space form $\mathbb H_\varepsilon^n$ of curvature $\varepsilon$. 
Let $B_\varepsilon(r)\subset \mathbb H_\varepsilon^n$ be the ball of radius  $r$ and $S_\varepsilon(r)=\partial B_\varepsilon(r)$.  Note that $S_\varepsilon(r)$ has intrinsic metric $\gamma_\varepsilon(r)=s^2_\varepsilon(r)g_0$ and mean curvature $H_{\varepsilon}(r)=(n-1)f_\varepsilon(r)$, where $f_\varepsilon=c_\varepsilon/s_\varepsilon$.

Let $\gamma$ be a metric on $S_\varepsilon(r)$ satisfying $\gamma\geq \gamma_{\varepsilon}(r)$. A result by Herzlich \cite{H2} says that 
\begin{equation}\label{spect2}
\lambda^{\gamma}\leq \frac{n-1}{2s_\varepsilon(r)}.
\end{equation}
Moreover, the equality holds if and only if $\gamma=\gamma_\varepsilon(r)$. Combining (\ref{spect1}) with (\ref{spect2}) and usint that $\varepsilon s_\varepsilon^2+c_\varepsilon^2=1$, we get the following result.

\begin{theorem}\label{theo1}
	Let $(M,G)$
	be a Riemannian manifold satisfying $R\geq \varepsilon n(n-1)$ and $H\geq 0$ on $\Sigma$. Assume also that $\Sigma$ is diffeomorphic to $S_\varepsilon(r)$ and that $\gamma=g|_\Sigma\geq \gamma_\varepsilon(r)$ there. Then
	\[
	\underline H\leq H_{\varepsilon}(r). 
	\] 
	Moreover, the equality implies that $M$ is Einstein (with $R=\varepsilon n(n-1)$),  $\gamma=\gamma_\varepsilon(r)$ and $H=H_\varepsilon(r)$.
\end{theorem}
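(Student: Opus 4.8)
The plan is to chain the two eigenvalue estimates already established. First I would invoke the lower bound from Corollary \ref{corspin},
\[
\lambda^\gamma \geq \frac{1}{2}\sqrt{\underline H^2 + \varepsilon(n-1)^2},
\]
together with Herzlich's upper bound (\ref{spect2}), namely $\lambda^\gamma \leq \frac{n-1}{2 s_\varepsilon(r)}$, which is applicable precisely because the hypothesis $\gamma \geq \gamma_\varepsilon(r)$ is exactly the condition required by (\ref{spect2}). Combining the two inequalities and squaring gives
\[
\underline H^2 + \varepsilon(n-1)^2 \leq \frac{(n-1)^2}{s_\varepsilon^2(r)}.
\]
Note this is legitimate since both sides are nonnegative (using $H\geq 0$ on $\Sigma$, hence $\underline H\geq 0$).

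Next I would simplify using the Pythagorean-type identity $\varepsilon s_\varepsilon^2 + c_\varepsilon^2 = 1$. Rearranging yields $\underline H^2 \leq (n-1)^2(1 - \varepsilon s_\varepsilon^2)/s_\varepsilon^2 = (n-1)^2 c_\varepsilon^2/s_\varepsilon^2 = H_\varepsilon^2(r)$, where I have used $H_\varepsilon = (n-1)f_\varepsilon$ and $f_\varepsilon = c_\varepsilon/s_\varepsilon$. Since $\underline H \geq 0$ and $H_\varepsilon(r) > 0$, taking square roots produces the stated inequality $\underline H \leq H_\varepsilon(r)$.

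For the rigidity statement I would observe that the same computation shows the two endpoints of the chain coincide exactly when $\underline H = H_\varepsilon(r)$, since $\frac{1}{2}\sqrt{H_\varepsilon^2 + \varepsilon(n-1)^2} = \frac{n-1}{2 s_\varepsilon(r)}$ by the identity above. Thus equality forces $\lambda^\gamma$ to realize both bounds simultaneously, so both (\ref{spect1}) and (\ref{spect2}) are equalities. Equality in Herzlich's bound (\ref{spect2}) gives $\gamma = \gamma_\varepsilon(r)$, while equality in Corollary \ref{corspin} forces $(M,g)$ to be Einstein with $R = \varepsilon n(n-1)$ and $\Sigma$ to have constant mean curvature; the latter upgrades $\underline H = H_\varepsilon(r)$ to the pointwise identity $H = H_\varepsilon(r)$, completing the rigidity conclusion.

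At this stage there is essentially no deep obstacle left, since the analytic content has been absorbed into Corollary \ref{corspin} (which rests on the integral Lichnerowicz formula (\ref{lichform}) and Proposition \ref{propspin}) and into Herzlich's Vafa--Witten-type bound. The one point demanding care is the equality analysis: one must check that the two independent rigidity conditions are mutually compatible, and in particular that it is precisely the constancy of the mean curvature—rather than the raw inequality—that promotes $\underline H = H_\varepsilon(r)$ to $H = H_\varepsilon(r)$ everywhere on $\Sigma$.
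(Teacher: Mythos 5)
Your proposal is correct and follows exactly the paper's route: the authors obtain Theorem \ref{theo1} precisely by combining the lower bound (\ref{spect1}) from Corollary \ref{corspin} with Herzlich's upper bound (\ref{spect2}) and simplifying via $\varepsilon s_\varepsilon^2+c_\varepsilon^2=1$, with the rigidity read off from the equality cases of both spectral estimates. Your additional care in noting that constancy of $H$ (from the equality case of Proposition \ref{propspin}) is what upgrades $\underline H=H_\varepsilon(r)$ to $H\equiv H_\varepsilon(r)$ is a point the paper leaves implicit, but it is the same argument.
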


As an immediate consequence we obtain the following extension of Theorem \ref{main-thmBBB} when $\varepsilon=0,-1$ and $M$ is spin.

\begin{theorem}
	Let $(M,g)$ be a $V$-static spin manifold with $R_g=\varepsilon n(n-1)$, $\varepsilon=0,-1$. Assume that $\Sigma$ is diffeomorphic to $S_\varepsilon (r)$ and that $\gamma=g|_\Sigma\geq \gamma_\varepsilon(r)$ there. Then 
	\[H\leq H_\varepsilon(r).
	\] 
	Moreover, the equality implies that 
	$(M,g)=(B_\varepsilon(r),g_\varepsilon)$.
\end{theorem}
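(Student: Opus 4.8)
The plan is to deduce the statement directly from Theorem \ref{theo1}, after checking that a $V$-static manifold automatically satisfies its hypotheses and that its conclusion can be upgraded from the infimum $\underline H$ to the actual (constant) value $H$. First I would note that $R_g=\varepsilon n(n-1)$ trivially gives $R_g\geq \varepsilon n(n-1)$, and that the boundary $\Sigma=\lambda^{-1}(0)$ of a $V$-static manifold is umbilic with mean curvature $H=-(\partial\lambda/\partial\nu)^{-1}>0$, so in particular $H\geq 0$. Since by hypothesis $\Sigma$ is diffeomorphic to $S_\varepsilon(r)$ and $\gamma=g|_\Sigma\geq\gamma_\varepsilon(r)$, all the assumptions of Theorem \ref{theo1} are in force.

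Next I would record the elementary but essential fact that, on a \emph{connected} boundary of a $V$-static manifold, $H$ is constant. Restricting (\ref{vstat}) to $\Sigma$, where $\lambda$ and its tangential derivatives vanish, one reads off that $\Delta_g\lambda$ and $\nabla^2_g\lambda(\nu,\nu)$ are universal constants and that the tangential Hessian is $\kappa\,II$, where $\kappa=|\nabla_g\lambda|$. The mixed component $\nabla^2_g\lambda(\nu,X)$ vanishes by the equation, while a direct computation gives $\nabla^2_g\lambda(\nu,X)=-X\kappa$ for $X$ tangent to $\Sigma$; comparing the two shows $X\kappa=0$, so $\kappa$, and hence $H=\kappa^{-1}$, is constant along the connected boundary. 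Consequently $\underline H=H$, and Theorem \ref{theo1} yields $H\leq H_\varepsilon(r)$, which is the desired inequality.

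For the rigidity statement I would argue as follows. If $H=H_\varepsilon(r)$, then the equality case of Theorem \ref{theo1} forces $(M,g)$ to be Einstein with $R_g=\varepsilon n(n-1)$, together with $\gamma=\gamma_\varepsilon(r)$. Now $(M,g)$ is an Einstein $V$-static manifold with connected boundary, so Theorem \ref{MTam} identifies it with a geodesic ball in a simply connected space form; since $R_g=\varepsilon n(n-1)$ pins the ambient curvature to be $\varepsilon$, this ball sits inside $\mathbb H^n_\varepsilon$. Finally, the induced metric being $\gamma_\varepsilon(r)=s_\varepsilon^2(r)h_0$ determines the radius uniquely to be $r$ (as $s_\varepsilon$ is injective for $\varepsilon=0,-1$), so $(M,g)=(B_\varepsilon(r),g_\varepsilon)$, completing the proof.

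The genuinely hard content has already been packaged into the results being invoked: the spinorial lower bound of Proposition \ref{propspin} and Corollary \ref{corspin} combined with Herzlich's Vafa--Witten-type estimate (\ref{spect2}), which together produce Theorem \ref{theo1}, and the Miao--Tam rigidity of Theorem \ref{MTam}. Granting these, the only point demanding care is the passage from $\underline H$ to $H$, that is, the constancy of the mean curvature on the connected boundary; this is precisely where the full $V$-static equation (\ref{vstat}) is used, as opposed to the weaker scalar-curvature and mean-curvature sign conditions sufficient for Theorem \ref{theo1}.
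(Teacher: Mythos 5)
Your proof is correct and follows essentially the same route as the paper: the inequality is read off from Theorem \ref{theo1} (using that $H$ is constant on the connected boundary, a fact the paper already records in Section \ref{vstatic}, so $\underline H=H$), and the equality case is settled by noting that $(M,g)$ becomes Einstein and invoking Theorem \ref{MTam}. The paper's own proof is just this one-line observation; your extra details (the constancy of $\kappa=|\nabla_g\lambda|$ via the mixed Hessian component, and the identification of the radius from $\gamma=\gamma_\varepsilon(r)$) are sound fillings-in of steps the paper leaves implicit.
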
  

\begin{proof}
	Just observe that the equality  $H= H_\varepsilon(r)$ implies that $(M,g)$ is Einstein and use Theorem \ref{MTam}.
\end{proof}

This corresponds to \cite[Theorem 2]{HMRa} in our setting.
Note also that for $n=3$ the spin condition is superfluous.

\end{document}